\newcommand{\old}[1]{}
\theoremstyle{plain}
\newtheorem{thm}{Theorem}[section]
\newtheorem{lem}[thm]{Lemma}
\newtheorem{cor}[thm]{Corollary}
\newtheorem{prop}[thm]{Proposition}
\theoremstyle{definition}
\newtheorem{defn}[thm]{Definition}
\numberwithin{equation}{section}
\numberwithin{equation}{thm}
\def\a{{\alpha}}
\title{On conjugacy classes of $S_n$ containing all irreducibles}
\author{Sheila Sundaram}
\address{Pierrepont School, One Sylvan Road North, Westport, CT 06880}
\email{shsund@comcast.net}
\date{3 February 2016; revised  28 September 2016, 27 February 2017}
\dedicatory{For Priyanka}
\subjclass[2010]{20C05, 20C15, 20C30, 05E18, 06A07}
\begin{document}

\begin{abstract} It is shown that for the conjugation action of the symmetric group $S_n,$ when $n=6$ or $n\geq 8,$ all $S_n$-irreducibles appear as constituents of a single conjugacy class, namely, one indexed by a partition $\lambda$ of $n$ with at least two parts, whose parts are all distinct  and taken from the set of odd primes and 1.  The following simple characterisation of conjugacy classes containing all irreducibles is proved: If $n\neq 4,8,$ the partition $\lambda$ of $n$ indexes a global conjugacy class for $S_n$ if and only if it has at least two parts, and all its parts are odd and distinct. 
\end{abstract}

\maketitle

\section{Introduction}
Consider the conjugation action of $S_n$ on itself.  Its orbits are the conjugacy classes, indexed by partitions $\lambda$ of $n.$ 
 In this paper we address the following question: Is there a single conjugacy class which contains all the irreducibles for the conjugation action of $S_n$?  This question was answered affirmatively for the alternating group $A_n$ and for other simple sporadic groups in \cite[Theorem 1.6]{HZ}, where such a conjugacy class is called a {\it global}  class.    Our result is the following (see also Theorem 5.1): 

\begin{thm} For $n\geq 2,$ there is a global conjugacy class for the conjugation action of $S_n$ if and only if  $n=6$ or $n\geq 8.$   More precisely,   let $\lambda$ be a partition of $n$ into distinct parts, such that all parts greater than 1 are odd primes, and such that $\lambda$ has at least two parts.  In this case,  every $S_n$-irreducible occurs in  the conjugacy class indexed by  $\lambda,$  provided $n=6$ or $n\geq 9.$ If $n=8,$ every $S_8$-irreducible occurs in the class indexed by the partition $(7,1).$
\end{thm}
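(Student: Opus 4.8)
The plan is to translate the statement into symmetric functions and reduce it to a purely combinatorial positivity assertion about Littlewood--Richardson coefficients. Writing $\lambda=(\lambda_1,\dots,\lambda_k)$ with the $\lambda_i$ distinct, the stabiliser of an element of cycle type $\lambda$ under conjugation is its centraliser $Z_\lambda=C_{\lambda_1}\times\cdots\times C_{\lambda_k}$, so the conjugacy class carries the permutation module $\mathrm{Ind}_{Z_\lambda}^{S_n}\mathbf{1}$. Since induction from a Young subgroup corresponds to multiplication of Frobenius characteristics and the parts are distinct, its characteristic is the product $\prod_{i=1}^k\ell_{\lambda_i}$, where $\ell_m:=\mathrm{ch}\,\mathrm{Ind}_{C_m}^{S_m}\mathbf{1}$. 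Thus the multiplicity of the irreducible $S^\mu$ in the class of $\lambda$ is $\langle\prod_i\ell_{\lambda_i},\,s_\mu\rangle$, and the ``more precisely'' assertion is that this is strictly positive for every $\mu\vdash n$. The forward implication of the iff then follows by exhibiting such a $\lambda$ for each relevant $n$, while the ``only if'' direction reduces to checking the finitely many $n\in\{2,3,4,5,7\}$ directly.

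First I would record the Schur expansion of the building block $\ell_p$ for an odd prime $p$. As a $p$-cycle generates $C_p$ and all its nonidentity powers are again $p$-cycles, $\langle\ell_p,s_\mu\rangle=\tfrac1p\big(f^\mu+(p-1)\,\chi^\mu(p\text{-cycle})\big)$, and by Murnaghan--Nakayama $\chi^\mu(p\text{-cycle})$ vanishes unless $\mu=(p-k,1^{k})$ is a hook, where it equals $(-1)^{k}$. A short check then shows $\langle\ell_p,s_\mu\rangle\ge 1$ for every $\mu\vdash p$ except the two ``thin'' hooks $(p-1,1)$ and $(2,1^{p-2})$, where the multiplicity is $0$; in particular $\ell_p$ contains the full row $s_{(p)}$, the full column $s_{(1^p)}$, and every non-hook with positive coefficient. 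Setting $A_p:=\{\mu\vdash p:\mu\neq(p-1,1),(2,1^{p-2})\}$ and $A_1:=\{(1)\}$, we obtain the coefficientwise bound $\ell_p\ge\sum_{\kappa\in A_p}s_\kappa$.

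Multiplying these bounds across the factors (products of Schur-positive functions preserve coefficientwise domination) reduces the theorem to a combinatorial claim: for every $\mu\vdash n$ there is a choice of shapes $\kappa^{(i)}\in A_{\lambda_i}$ whose iterated Littlewood--Richardson coefficient $c^{\mu}_{\kappa^{(1)},\dots,\kappa^{(k)}}$ is positive, i.e.\ $\mu$ can be assembled by LR from one admissible piece per part. A part equal to $1$, when present, is especially convenient, since $\langle s_1 F,s_\mu\rangle=\sum_{\mu^-}\langle F,s_{\mu^-}\rangle$ sums over all one-box deletions $\mu^-$ of $\mu$ and so lets me delete a corner freely. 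For $n\ge 10$ every admissible $\lambda$ has a prime part $P\ge 7$ (parts drawn only from $\{1,3,5\}$ sum to at most $9$), and $A_P$ omits only two thin hooks; the strategy is to peel horizontal or vertical strips off $\mu$ for the remaining parts—each $A_{\lambda_i}$ contains both $s_{(\lambda_i)}$ and $s_{(1^{\lambda_i})}$—so as to leave a residual shape of size $P$ avoiding the two forbidden hooks, and then match it against $A_P$. I would organise the verification by the shape of $\mu$ (rows, columns, hooks, two-rowed and two-columned shapes, and the generic interior case), dispatching the extreme shapes via the available full rows and columns and the rich non-hook content of the large prime. Existence of at least one admissible $\lambda$ for each relevant $n$ is a separate, elementary number-theoretic point.

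The main obstacle is precisely the rigid, balanced shapes of $\mu$—fat rectangles and balanced hooks—where there is least room to peel strips and the residual piece is hardest to keep out of the forbidden set; this is exactly what makes the claim fail for some $\lambda$ at small $n$. Indeed the same analysis shows that for $n=8$ the partition $(5,3)$ cannot assemble $\mu=(4,4)$: the only pieces available are $A_5$ and $A_3=\{(3),(1^3)\}$, and no LR product of these contains $s_{(4,4)}$, which is why $n=8$ must be handled separately. There I would instead verify that the richer partition $(7,1)$, whose factor $\ell_7$ omits only $(6,1)$ and $(2,1^5)$, assembles every $\mu\vdash 8$ once the part $1$ supplies a free corner-deletion. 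The remaining small cases, $n=6$ (the unique admissible partition $(5,1)$) and $n=9$ (the unique admissible partition $(5,3,1)$, the only one with no part exceeding $5$), I would finish by the explicit expansion of the previous step.
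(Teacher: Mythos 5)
Your setup coincides with the paper's: the reduction to the product $\prod_i f_{\lambda_i}$ of Frobenius characteristics, the character-theoretic expansion of $f_p$ for an odd prime $p$ (the paper's Proposition 3.1, quoted from [Su2]), and the coefficientwise bound $f_p\geq g_p:=\sum_{\kappa\in A_p}s_\kappa$ are exactly the paper's ingredients. The gap is in your core combinatorial strategy. You propose to use the full richness of $A_P$ only for the largest prime part $P$, and for \emph{every other part} $\lambda_i$ to use only the row $s_{(\lambda_i)}$ and the column $s_{(1^{\lambda_i})}$, i.e.\ to peel horizontal or vertical strips. This fails whenever some non-maximal part is $\geq 5$. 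Concretely, take $n=16$ and the admissible partition $\lambda=(11,5)$ (or $(7,5,3,1)$), and $\mu=(4,4,4,4)$. Every intermediate shape arising in the peeling process is contained in $(4,4,4,4)$, hence has at most $4$ rows and at most $4$ columns; a horizontal strip therefore has at most $4$ cells and so does a vertical strip, so a strip of size $5$ can never be removed at any stage, in any order. Thus the iterated LR coefficient with the piece $(5)$ or $(1^5)$ vanishes identically for this $\mu$, and your method cannot produce $s_{(4,4,4,4)}$ — yet the theorem asserts (and the paper proves) that these classes are global. The shape $s_{(4,4,4,4)}$ does occur, but only through non-hook constituents of $f_5$ such as $s_{(3,2)}$, which your strategy forbids itself from using. (Your plan is safe only when all non-maximal parts lie in $\{1,3\}$, since $g_1$ and $g_3$ really are just rows and columns; the theorem, however, covers all admissible $\lambda$.)

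This is precisely the difficulty the paper's architecture is built to avoid. Its Lemma 3.4 keeps the \emph{unrestricted} sum $A_{n-q}$ on the large side and only requires finding, inside $\mu$, a single well-chosen subshape of size $q$ avoiding the two forbidden hooks (Lemma 3.3); the restricted factor $g_q$ is then paired with an arbitrary complement. The genuinely hard case — where \emph{both} factors are restricted — is isolated in Lemma 3.5 ($g_pg_q\supseteq A_{p+q}$ for the two largest parts), proved by a long case analysis of skew shapes, and it is invoked exactly once; all remaining parts are absorbed by iterating Lemma 3.4. Your proposal inverts this division of labor (richness concentrated on one factor, rigidity on all the others), and the fat rectangles you correctly flag as the delicate case are in fact fatal to it, not merely delicate. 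To repair the argument you would need, for each non-maximal part $q\geq 5$, access to the non-hook shapes of $A_q$ together with a lemma guaranteeing a suitable admissible subshape/complement pair — which is essentially to rediscover Lemmas 3.3--3.5 of the paper.
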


It follows immediately that (for $n\geq 8$) the permutation module arising from the conjugation action of $S_n$ on itself, as well as the twisted analogue of this action studied in \cite{Su2}, both contain a copy of every $S_n$-irreducible.  For the conjugacy action,  proofs that  this fact holds for all $n\geq 3$ were given by A. Frumkin, D. Passman and T. Scharf respectively, in \cite{F}, \cite[Theorem 1.10]{P} and \cite{Sch}, in each case  by a different method; a more general result was proved in \cite{Su2}.  For the sign-twisted conjugacy action the analogous result was shown to hold for all $n$ in \cite[Theorem 4.9]{Su2}. 

A related question is Passman's problem of determining the kernel of the adjoint representation of the group algebra.  In \cite{P}, Passman proves that the conjugacy action for $S_n$ contains every irreducible by showing  the equivalent statement \cite[Lemma 1.12]{P},  that the kernel of the adjoint action is trivial. Heide and Zalesski (\cite{HZ}; see also \cite{HSTZ}) conjecture that for a finite simple group, this kernel is trivial if and only if the group $G$ admits a  global conjugacy class (see [3], p.165, remarks leading to Conjecture 1.5). 

Theorem 1.1 strengthens and gives an alternative proof of  \cite[Theorem 4.17]{Su2} that the $S_n$-conjugacy action on the subset of even permutations contains every irreducible.

Our proof of Theorem 1.1 uses the description of the ordinary and twisted conjugacy action as the symmetric or exterior power of the conjugation action of $S_n$ on an $n$-cycle. This formulation  was  exploited in \cite{Su2} to derive many properties of these and other related representations.  In the next section we summarise the key facts from \cite{Su2} that are used in this paper, referring to \cite{M} for background on symmetric functions. 

\section{Preliminaries}

Consider the representation $1\uparrow_{C_n}^{S_n}$ of $S_n$ obtained by inducing the trivial representation of a cyclic subgroup $C_n$ of order $n.$  Clearly this is the representation afforded by the conjugation action on the class of $n$-cycles.
 Let $f_n$ denote the Frobenius characteristic (see \cite{M}) of the induced representation $1\uparrow_{C_n}^{S_n}$ of $S_n$ obtained by inducing the trivial representation of a cyclic subgroup $C_n$ of order $n.$  Let $h_n$ and $e_n$ denote respectively the homogeneous and elementary symmetric functions  of degree $n,$  and let $[\, ]$ denote the plethysm operation.   Then it is clear that  conjugation on the conjugacy class indexed by a partition $\lambda$ of $n$ with $m_i$ parts equal to $i,$ has  Frobenius characteristic  $H_\lambda[F]=\prod_{i\geq 1} h_{m_i}[f_i].$    Similarly, the Frobenius characteristic of the twisted conjugation action on the class indexed by $\lambda$ is $E_\lambda[F]=\prod_{i\geq 1} e_{m_i}[f_i].$ 

In particular, when $\lambda$ consists of distinct parts $\lambda_i,$ the Frobenius characteristic of both the ordinary and twisted conjugation actions on the conjugacy class indexed by $\lambda$ is simply the product of symmetric functions $\prod_i f_{\lambda_i}.$  A crucial property of the representations $f_n$, stated in 
Proposition 3.1, and proved in \cite{Su2},  allows us to determine when the representation corresponding to these products contains every $S_n$-irreducible. 

The crux of our arguments therefore lies in a detailed analysis of the products $\prod_i f_{\lambda_i}$. Since the Schur function $s_\lambda$ is the Frobenius characteristic of the irreducible indexed by $\lambda, $ we are led to products of Schur functions and the Littlewood-Richardson rule.   Recall \cite{M} that a {\it lattice permutation} of weight $\nu$ is a sequence of positive integers with the number $i$ appearing $\nu_i$ times,  such that in any initial segment of the sequence, the number of $i$'s is never less than the number of $(i+1)$'s.  For partitions $\lambda, \mu, \nu,$ the multiplicity of the Schur function $s_\lambda$ in the product $s_\mu s_\nu,$ is the Littlewood-Richardson coefficient (or LR-coefficient)  $c_{\mu, \nu}^{\lambda}$ (\cite{M}).
This coefficient counts the number of semi-standard fillings of weight $\nu$  of  the skew-shape $\lambda/\mu,$ i.e., one that is weakly increasing in the rows, left to right, and strictly increasing down the columns, with the property that  when this filling is read right to left along rows, and top to bottom down the (skew-)shape, the resulting sequence is a lattice permutation. 

We record the following  basic facts (\cite{M})  to summarise the key properties of the Littlewood-Richardson coefficient that we will need:

\begin{prop}  The following are equivalent for partitions $\lambda, \mu,\nu:$ 
\begin{enumerate}
\item $s_\lambda$ appears in the product $s_\mu s_\nu;$  
\item $s_\nu$ appears in $s_{\lambda/\mu};$
\item $   c_{\mu, \nu}^{\lambda}\geq 1 ; $
\item  there is  a semi-standard tableau of weight $\nu$ of  the skew-shape $\lambda/\mu,$ such that when the tableau is read right to left along rows and top to bottom down the skew-shape, the resulting sequence is a lattice permutation.
\end{enumerate}
Furthermore, given partitions $\lambda$ and $\mu,$ there is a partition $\nu$ satisfying one of the above equivalent conditions if and only if $ \mu\subset \lambda .$
\end{prop}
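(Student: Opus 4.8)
The plan is to deduce everything from the definition of the Littlewood--Richardson coefficient together with two standard facts about Schur and skew Schur functions found in \cite{M}. First I would observe that the equivalence of (1) and (3) is immediate: by definition $c_{\mu,\nu}^{\lambda}$ is precisely the coefficient of $s_\lambda$ in the Schur expansion $s_\mu s_\nu=\sum_\lambda c_{\mu,\nu}^{\lambda}\, s_\lambda$, so $s_\lambda$ occurs in $s_\mu s_\nu$ exactly when $c_{\mu,\nu}^{\lambda}\geq 1$. For the equivalence of (2) and (3), I would invoke the standard expansion of the skew Schur function, $s_{\lambda/\mu}=\sum_\nu c_{\mu,\nu}^{\lambda}\, s_\nu$ (Macdonald I.5); since the $s_\nu$ are linearly independent, $s_\nu$ occurs in $s_{\lambda/\mu}$ precisely when $c_{\mu,\nu}^{\lambda}\geq 1$.

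The equivalence of (3) and (4) is the Littlewood--Richardson rule itself, as recalled in the text preceding the statement: $c_{\mu,\nu}^{\lambda}$ equals the number of semistandard fillings of the skew shape $\lambda/\mu$ of weight $\nu$ whose reverse reading word (right to left along rows, top to bottom) is a lattice permutation. Hence $c_{\mu,\nu}^{\lambda}\geq 1$ if and only if at least one such tableau exists, which is condition (4). This closes the cycle of equivalences among (1)--(4).

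It remains to prove the final assertion. If $\mu\not\subset\lambda$ then $\lambda/\mu$ is not a valid skew diagram, so there is no semistandard filling of it, and the rule forces $c_{\mu,\nu}^{\lambda}=0$ for every $\nu$; equivalently $s_{\lambda/\mu}=0$, and no $\nu$ can satisfy the equivalent conditions. Conversely, suppose $\mu\subset\lambda$. I would first exhibit one valid semistandard tableau of shape $\lambda/\mu$, namely the filling that places the entry $i$ in every cell of the $i$-th row; its rows are constant, hence weakly increasing, and its columns are strictly increasing because the occupied cells of any fixed column lie in rows of strictly increasing index. This shows $s_{\lambda/\mu}\neq 0$. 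Since $s_{\lambda/\mu}=\sum_\nu c_{\mu,\nu}^{\lambda}\, s_\nu$ is then a nonzero nonnegative combination of linearly independent Schur functions, some coefficient $c_{\mu,\nu}^{\lambda}$ is positive, yielding a partition $\nu$ satisfying (3) and hence all four conditions.

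The argument is essentially a bookkeeping exercise assembling definitions and the Littlewood--Richardson rule, so I do not anticipate a genuine obstacle. The one point requiring care is the forward half of the final assertion: the naive row-filling above is semistandard but its reading word $1^{a_1}2^{a_2}\cdots$ (with $a_i=\lambda_i-\mu_i$) need not be a lattice permutation when the skew rows fail to weakly decrease in length, so one should not try to produce an explicit LR tableau directly. Arguing instead through the nonvanishing of $s_{\lambda/\mu}$ and the linear independence of the $s_\nu$ sidesteps this and is the cleanest route.
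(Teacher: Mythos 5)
Your proof is correct, and it matches the paper's treatment in substance: the paper states this proposition without proof, simply as a record of basic facts cited to \cite{M}, and your argument is exactly the standard bookkeeping behind those facts (definition of $c_{\mu,\nu}^{\lambda}$, the skew expansion $s_{\lambda/\mu}=\sum_\nu c_{\mu,\nu}^{\lambda}s_\nu$, and the Littlewood--Richardson rule). Your handling of the one genuinely delicate point --- the ``furthermore'' clause, where you prove $s_{\lambda/\mu}\neq 0$ via the row-superstandard filling and linear independence rather than trying to force that filling to have a lattice reading word (which it need not, e.g.\ for $\lambda=(2,2)$, $\mu=(1)$) --- is exactly right.
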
 

\section{The ingredients of the proof}

The characterisation of a global $S_n$-class as given in Theorem 1.1 is motivated by the following key property  regarding the irreducibles appearing in  $f_n$.  This property was established in \cite{Su2} using the well-known character values of the representation $f_n.$

\begin{prop}\cite[Lemma 4.7]{Su2} If $k$ is an odd prime then the coefficient of $s_\mu$ in the Schur function expansion of $f_k$ is a  positive  integer for every partition $\mu$ of $k$ except $\mu=(k-1,1)$ and $\mu=(2, 1^{k-2}).$ 
\end{prop}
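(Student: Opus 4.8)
The plan is to read off the coefficient of $s_\mu$ in $f_k$ directly from the character of the induced representation. By Frobenius reciprocity this coefficient equals the multiplicity of the trivial character of $C_k$ in the restriction of $\chi^\mu$, namely $\frac{1}{k}\sum_{g\in C_k}\chi^\mu(g)$. Since $k$ is prime, every non-identity element of $C_k$ is a $k$-cycle and there are exactly $k-1$ of them, so the coefficient is
$$\frac{1}{k}\bigl(f^\mu + (k-1)\,\chi^\mu(\sigma)\bigr),$$
where $f^\mu=\chi^\mu(\mathrm{id})$ is the number of standard Young tableaux of shape $\mu$ and $\sigma$ is a $k$-cycle. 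Equivalently, one may work entirely with symmetric functions via $f_k=\frac{1}{k}\bigl(p_1^k+(k-1)p_k\bigr)$ together with $p_1^k=\sum_{\mu\vdash k}f^\mu s_\mu$ and the hook expansion $p_k=\sum_{r=0}^{k-1}(-1)^r s_{(k-r,1^r)}$ of the power sum.

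Next I would evaluate $\chi^\mu(\sigma)$ by the Murnaghan--Nakayama rule. Removing a border strip of size $k$ from a shape with exactly $k$ cells forces the strip to be all of $\mu$, which can happen only when $\mu$ is a hook; thus $\chi^\mu(\sigma)=0$ for non-hook $\mu$, while $\chi^\mu(\sigma)=(-1)^r$ for a hook $\mu=(k-r,1^r)$. This immediately disposes of all non-hook shapes: there the coefficient is $f^\mu/k$, which is a nonnegative integer because it is a genuine multiplicity and is strictly positive because $f^\mu>0$, hence a positive integer.

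It remains to analyze the hooks, which is where the two exceptions appear. Using $f^{(k-r,1^r)}=\binom{k-1}{r}$, the coefficient of $s_{(k-r,1^r)}$ is $\frac{1}{k}\bigl(\binom{k-1}{r}+(k-1)(-1)^r\bigr)$. For even $r$ (in particular for the trivial and sign representations, $r=0$ and $r=k-1$, using that $k$ is odd) both summands are positive. For odd $r$ the coefficient is $\frac{1}{k}\bigl(\binom{k-1}{r}-(k-1)\bigr)$, so I would compare $\binom{k-1}{r}$ with $\binom{k-1}{1}=\binom{k-1}{k-2}=k-1$. By the symmetry and unimodality of the binomial coefficients, equality $\binom{k-1}{r}=k-1$ holds exactly at $r=1$ and $r=k-2$, giving the two vanishing cases $\mu=(k-1,1)$ and $\mu=(2,1^{k-2})$, while $\binom{k-1}{r}>k-1$ for all other $r$ with $2\le r\le k-3$, forcing strict positivity there.

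The only mildly delicate step, which I would handle with care, is the hook case: one must track the parity of $r$ correctly (using throughout that $k$ is odd), confirm that the two exceptional partitions correspond precisely to the equality $\binom{k-1}{r}=k-1$, and check the smallest prime $k=3$ separately, where $(k-1,1)$ and $(2,1^{k-2})$ coincide as the single partition $(2,1)$. Everything else is a direct reading-off of coefficients, and integrality is automatic since the quantities involved are representation multiplicities.
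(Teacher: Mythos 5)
Your proof is correct and follows essentially the same route as the paper's own argument: expand $f_k=\frac{1}{k}\bigl(p_1^k+(k-1)p_k\bigr)$ using primality of $k$, apply the Murnaghan--Nakayama/hook evaluation $\chi^\mu(\sigma)=0$ for non-hooks and $(-1)^r$ for $\mu=(k-r,1^r)$, and finish with the dimension formula $f^{(k-r,1^r)}=\binom{k-1}{r}$ together with integrality of multiplicities for the non-hook shapes. If anything, your unimodality comparison $\binom{k-1}{r}>\binom{k-1}{1}=k-1$ for $2\le r\le k-3$ makes explicit the positivity of the intermediate hook coefficients, a point the paper's argument passes over silently.
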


We will need several technical lemmas, which we state in terms of a class of functions whose definition is motivated by Proposition 3.1.

\begin{defn}   Define symmetric functions  $A_n$ and $g_n$ 
($n\geq 0$), as follows:
 
\begin{center}$A_n=\sum_{\mu\vdash n} s_\mu$ for $n\geq 1,$\quad  $g_n=\sum_{\mu\vdash n, \mu, \mu^t\neq (n-1,1)} s_\mu$  for $n\ge 3.$ 
\end{center}

  \noindent 
Also define $g_2=A_2=s_{(2)}+s_{(1^2)},$ $g_1=A_1=s_{(1)},$ 
and $g_0=A_0=1.$  Thus $g_n=A_n -(s_{(n-1,1)} +s_{(2, 1^{n-2})} )$ for $n\geq 4,$ while $g_3=A_3-s_{(2,1)}.$
\end{defn}

\begin{lem} Let $q\geq 5$ and $n\geq q+1,$ or $q=3,4$ and $n\geq q+2.$ Let $\lambda$ be any  partition of $n.$  Then $\lambda$ contains a partition $\bar\mu$ of $q$ 
different from $(q-1,1)$ and $(2, 1^{q-2}).$  
\end{lem}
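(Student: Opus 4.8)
The plan is to work throughout with the containment (inclusion) order on Young diagrams, writing $\bar\mu\subseteq\lambda$ when $\bar\mu_i\le\lambda_i$ for all $i$, and to split the argument according to whether $\lambda$ is a hook. The two excluded partitions $(q-1,1)$ and $(2,1^{q-2})$ are both hooks, and each has second part equal to $1$; conversely, any partition of $q$ whose second part is at least $2$ is automatically different from both. So in the non-hook case I would aim to produce a sub-partition of size $q$ with second part $\ge 2$, whereas the hook case calls for a direct count. Note also that $(q)$ and $(1^q)$ differ from both forbidden partitions whenever $q\ge3$, since the forbidden ones have first part $\ge2$ and at least two rows.

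\emph{Non-hook case.} Suppose $\lambda_2\ge 2$, so that $(2,2)\subseteq\lambda$, and assume $q\ge 4$. Since $|\lambda|=n>q\ge 4$, I would descend from $\lambda$ to a sub-partition of size exactly $q$ by repeatedly deleting a single outer corner, each time choosing a corner whose removal preserves the containment of $(2,2)$. That such a corner exists whenever the current diagram $\rho$ strictly contains $(2,2)$ (i.e. $|\rho|\ge5$) is a short check: if $\rho$ has at least three rows, delete the corner in its last row, which leaves rows $1$ and $2$ untouched; if $\rho$ has exactly two rows $(\rho_1,\rho_2)$, delete from row $1$ when $\rho_1>\rho_2$, and from row $2$ when $\rho_1=\rho_2$ (forcing $\rho_2\ge3$). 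Since every deletion is performed only while the size exceeds $q\ge4$, hence exceeds $4$, the containment of $(2,2)$ is maintained down to size $q$, yielding a $\bar\mu$ with $\bar\mu_2\ge2$; such a $\bar\mu$ is not a hook, so $\bar\mu\ne(q-1,1),(2,1^{q-2})$. The value $q=3$ is set aside here, since a partition of $3$ cannot contain $(2,2)$.

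\emph{Hook case.} Suppose $\lambda=(a,1^{b})$ with $a+b=n$. Every sub-partition of $\lambda$ of size $q$ is again a hook $(c,1^{q-c})$, and $(c,1^{q-c})\subseteq\lambda$ holds precisely when $\max(1,q-b)\le c\le\min(a,q)$. If $a\ge q$ then $c=q$ is admissible and gives $\bar\mu=(q)$; if $b\ge q$ then $c=1$ is admissible and gives $\bar\mu=(1^{q})$; both are allowed. Otherwise $a,b\le q-1$, the admissible values of $c$ form the integer interval $[\,q-b,\,a\,]$, and this interval contains exactly $a-(q-b)+1=n-q+1$ values. The two forbidden hooks correspond to the two values $c=q-1$ and $c=2$, so it suffices to find an admissible $c\notin\{2,q-1\}$.

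This count is the crux of the matter and the point where the hypothesis must be calibrated exactly. For $q\ge5$ we have $n-q+1\ge2$, and an interval of two or more consecutive integers can fail to avoid $\{2,q-1\}$ only if it equals $\{2,q-1\}$ itself; but $2$ and $q-1$ are consecutive only when $q=4$, so no obstruction arises. For $q=4$ the forbidden values are $\{2,3\}$, so a two-element admissible interval could be exactly $\{2,3\}$ (realized by $\lambda=(3,1,1)\vdash5$); this happens only when $n-q+1=2$, i.e. $n=q+1=5$, which is precisely the case removed by the hypothesis $n\ge q+2$ for $q=4$. Finally, for $q=3$ the two forbidden partitions coincide as $(2,1)$, the only alternatives are $(3)$ and $(1^3)$, and if $\lambda$ contained neither then $\lambda_1\le2$ with at most two rows, forcing $n\le4<5$, contrary to $n\ge q+2=5$. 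Thus the only real obstacle is the boundary bookkeeping in the hook count, and the thresholds $n\ge q+1$ (for $q\ge5$) and $n\ge q+2$ (for $q=3,4$) are exactly what is needed to clear it.
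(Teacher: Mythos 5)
Your proof is correct, and it rests on the same basic dichotomy as the paper's own argument --- whether $\lambda$ contains a $2\times 2$ square (equivalently, whether it is a hook) --- but the mechanisms inside the two cases are genuinely different. The paper first reduces, via conjugation, to the situation $\lambda\supseteq(q-1,1)$, and then exhibits explicit witnesses: $(q-2,2)$ when $\lambda_2\geq 2$, and $(q)$, $(q-2,1^2)$ or $(1^4)$ when $\lambda$ is a hook, with ad hoc patches for $q=3,4$. You avoid the transposition reduction entirely: in the non-hook case you run a greedy descent, deleting outer corners while preserving containment of $(2,2)$ (your corner-existence check for shapes of size at least $5$ is right), which produces some non-hook $\bar\mu\vdash q$ without naming it; in the hook case you parametrize all sub-hooks $(c,1^{q-c})\subseteq(a,1^b)$ by the integer interval $[\max(1,q-b),\min(a,q)]$ and observe that a run of $n-q+1\geq 2$ consecutive integers cannot lie inside $\{2,q-1\}$ unless $q=4$ and $n=q+1$. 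This enumeration buys you something the paper only gets from separate remarks after Lemma 3.4: the tightness of the hypotheses is visible inside the proof, with $(3,1,1)\vdash 5$ identified as the unique obstruction for $q=4$. One organizational caveat: your non-hook case explicitly sets aside $q=3$, so for $q=3$ and non-hook $\lambda$ the lemma is carried entirely by your final paragraph; fortunately that argument (if $\lambda$ contains neither $(3)$ nor $(1^3)$ then $\lambda\subseteq(2,2)$, so $n\leq 4$) nowhere uses the hook assumption, so the coverage is complete --- but it should be stated outside the hook case so it does not look like a gap.
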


\begin{proof} Certainly $\lambda$ contains {\it some} partition of $q;$ suppose first that it contains $(q-1,1).$ 

First suppose $\lambda_1\geq\lambda_2\geq 2,$ i.e., the Ferrers diagram of $\lambda$ contains a 2 by 2 square.  If $q\geq 4,$ we can take $\bar\mu=(q-2,2).$   If $q=3,$ then since $n\geq 5,$ there is at least one additional cell in $\lambda.$ If this cell is in row 1 or row 2, we may take $\bar\mu=(3).$ Otherwise 
we may take $\bar\mu=(1^3).$  

Now suppose the Ferrers diagram of $\lambda$ does not contain a 2 by 2 square, so it must be a hook $(r, 1^{n-r}),$ where $q-1\leq r\leq n-1.$   Take $\bar\mu=(q)$ if $r\geq q.$ Otherwise $r=q-1,$ and $n-r=n-q+1\geq 2,$ so we may take $\bar\mu=(q-2, 1^2).$  This poses a problem only if $q= 4.$  But then $\lambda=(3, 1^{n-3}),$ and since $n\geq q+2=6,$  we can take $\bar\mu=(1^4).$

The case when $\lambda$ contains $\mu=(2,1^{q-2})$ is easily treated by applying the previous argument to the transpose or conjugate shape $\lambda^t.$   \end{proof}

\begin{lem} Let $q\geq 5$ and $n\geq q+1,$ or $q=3,4$ and $n\geq q+ 2, $  or $q=1,2$ and $n\geq q.$ Then for every partition $\lambda$ of $n,$ the Schur function $s_\lambda$ appears in the product $A_{n-q} g_q.$ Equivalently, there are partitions $\mu$ of $n-q$ and $\nu$ of $q,$ $\nu\notin\{(q-1,1), (2, 1^{q-2})\}$ for $q\geq 3$, such that the LR-coefficient $c^\lambda_{\mu, \nu}$ is positive.  In particular, this holds for $A_1g_q=g_1g_q$ for all $q\neq 3,4,$ and for $A_2 g_q$ for all $q.$
\end{lem}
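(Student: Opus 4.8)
The plan is to reduce the assertion to Lemma 3.3 by means of the containment criterion in Proposition 2.1. Expanding in the Schur basis, the coefficient of $s_\lambda$ in $A_{n-q}g_q$ is $\sum_{\mu,\nu}c^\lambda_{\mu,\nu}$, where $\mu$ ranges over partitions of $n-q$ and $\nu$ over partitions of $q$ other than $(q-1,1)$ and $(2,1^{q-2})$ (for $q\ge 3$). Since LR-coefficients are nonnegative, this total is positive exactly when a single admissible pair $(\mu,\nu)$ has $c^\lambda_{\mu,\nu}>0$; producing one such pair is precisely the ``Equivalently'' reformulation, so it is enough to exhibit it.

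First I would fix an admissible $\nu\vdash q$ (that is, $\nu\notin\{(q-1,1),(2,1^{q-2})\}$ when $q\ge 3$) and ask when some $\mu\vdash n-q$ satisfies $c^\lambda_{\mu,\nu}>0$. Using the symmetry $c^\lambda_{\mu,\nu}=c^\lambda_{\nu,\mu}$ and applying the final sentence of Proposition 2.1 to the pair $(\lambda,\nu)$ rather than $(\lambda,\mu)$, such a $\mu$ of size $|\lambda|-|\nu|=n-q$ exists if and only if $\nu\subset\lambda$. Consequently the coefficient of $s_\lambda$ in $A_{n-q}g_q$ is positive if and only if $\lambda$ contains some admissible $\nu\vdash q$.

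For $q\ge 3$ this is exactly the conclusion of Lemma 3.3 under the stated hypotheses ($q\ge 5$ with $n\ge q+1$, or $q=3,4$ with $n\ge q+2$): it furnishes a partition $\bar\mu\vdash q$ with $\bar\mu\subset\lambda$ and $\bar\mu\notin\{(q-1,1),(2,1^{q-2})\}$, and taking $\nu=\bar\mu$ completes the argument. For $q=1,2$ every partition of $q$ is admissible (indeed $g_1=A_1$ and $g_2=A_2$), so I only need a sub-partition $\mu\vdash n-q$ of $\lambda$; such a $\mu$ exists for every $\lambda\vdash n$ with $n\ge q$ by peeling off $q$ corner cells, and the complementary skew shape of size $q\le 2$ contributes $s_{(q)}$ or $s_{(1^q)}$, both admissible. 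Finally, the ``In particular'' statements are the specializations $n-q=1$ (so $A_1=g_1$) and $n-q=2$ (so $A_2$): the product $A_1g_q$ corresponds to $n=q+1$, which meets the hypotheses only for $q\ne 3,4$, whereas $A_2g_q$ corresponds to $n=q+2$, valid for every $q$.

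I expect the only genuine subtlety to be orienting Proposition 2.1 correctly, i.e.\ invoking the containment statement for $(\lambda,\nu)$ with the roles of the two smaller partitions interchanged, together with remembering to dispose of $q=1,2$ directly, since Lemma 3.3 is stated only for $q\ge 3$. All of the combinatorial labor is already carried out in Lemma 3.3, so the remaining work here is essentially bookkeeping.
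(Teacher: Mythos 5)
Your proof is correct and takes essentially the same route as the paper: invoke Lemma 3.3 (or, for $q=1,2$, the fact that $g_q=A_q$) to produce an admissible $\nu\vdash q$ contained in $\lambda$, then use the containment criterion of Proposition 2.1 to obtain a partition $\mu\vdash n-q$ with $c^\lambda_{\mu,\nu}>0$, and conclude from the definition of $A_{n-q}$. Your explicit handling of the orientation of Proposition 2.1 via the symmetry $c^\lambda_{\mu,\nu}=c^\lambda_{\nu,\mu}$ is a detail the paper leaves implicit, but the argument is the same.
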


\begin{proof}  From Lemma 3.3 (for $q\geq 3$), or by the definition of $g_q$ (for $q=1,2$), we know that there is a partition $\nu$ of $q$ contained in $\lambda$  such that $s_\nu$ appears as a summand of $g_q.$ By Proposition 2.1 there is a partition $\mu$ of $n-q$ such that the LR-coefficient 
$c^\lambda_{\mu, \nu}$ is nonzero, i.e., such that $s_\lambda$ appears in the product $s_\mu s_\nu.$  By the definition of  $A_{n-q}$, the result follows.    
 \end{proof}
Note that the conditions in the hypothesis are tight: $A_1g_4 =g_4 s_{(1)}$ does not contain the Schur function corresponding to the hook $(3,1^2),$ and $A_1g_3=g_3s_{(1)} $ does not contain the Schur function indexed by $(2,2).$

The next lemma is the heart of this paper.  We defer the proof to the next section, because  the arguments are technical and unilluminating beyond their immediate context.

\begin{lem} Let $p>q\geq 4,$ or $p\geq 6, q=3,$ or $p=4, q=3.$ Then in the Schur function expansion of $g_p g_q,$  the Schur function $s_\lambda$ appears with positive coefficient for every partition $\lambda$ of $p+q.$  Equivalently, there are partitions $\mu$ of $p,$ $\mu\notin \{(p-1,1), (2, 1^{p-2})\} $, and $\nu$ of $q,$ $\nu\notin\{(q-1,1), (2, 1^{q-2})\}$, such that the LR-coefficient $c^\lambda_{\mu, \nu}$ is positive. 
\end{lem}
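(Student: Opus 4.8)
The plan is to reformulate the statement combinatorially and then divide into cases according to the gross shape of $\lambda$. Since every Littlewood--Richardson coefficient is nonnegative, $s_\lambda$ occurs in $g_pg_q$ with positive coefficient if and only if there exists a \emph{single} admissible pair: a partition $\mu\vdash p$ with $\mu\notin\{(p-1,1),(2,1^{p-2})\}$ and a partition $\nu\vdash q$ with $\nu\notin\{(q-1,1),(2,1^{q-2})\}$ such that $c^\lambda_{\mu,\nu}>0$. By Proposition 2.1 this last condition means precisely that $\mu\subseteq\lambda$ and the skew shape $\lambda/\mu$ carries a Littlewood--Richardson filling of content $\nu$. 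I will call such $\mu,\nu$ \emph{admissible}. Because $A_n$ is self-conjugate and the two excluded partitions $(n-1,1)$ and $(2,1^{n-2})$ are conjugate to one another, each $g_n$ is self-conjugate, hence so is $g_pg_q$; thus $s_\lambda$ occurs iff $s_{\lambda^t}$ occurs, and I may assume throughout that $\lambda_1\ge\ell(\lambda)$. The analysis then splits on whether $\lambda_1\ge q$ or $\lambda_1\le q-1$.

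First suppose $\lambda$ is \emph{wide}, $\lambda_1\ge q$. Then one can peel $q$ cells off the bottom boundary of $\lambda$ as a horizontal strip, leaving a partition $\mu\vdash p$ with $\mu\subseteq\lambda$; since $\lambda/\mu$ is a horizontal strip it admits the all-ones filling, so $\nu=(q)$ is an admissible content. What remains is to choose the strip so that $\mu$ itself avoids $(p-1,1)$ and $(2,1^{p-2})$. The integer points of the interlacing conditions $\lambda_{i+1}\le\mu_i\le\lambda_i$ with $\sum\mu_i=p$ form a nonempty set, and $\lambda_1\ge q\ge 3$ leaves room to redistribute: when the naive removal yields a forbidden near-hook, one instead removes one extra cell from a lower row and one fewer from the first row, moving $\mu$ off the forbidden shape. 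The conjugate obstruction $(2,1^{p-2})$ can occur only when $p$ is close to $q$, and the finitely many small $\lambda$ with very few parts are checked directly. Conjugating, this also settles every $\lambda$ with $\ell(\lambda)\ge q$.

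The essential case is when $\lambda$ is \emph{boxed}: $\lambda_1\le q-1$, whence also $\ell(\lambda)\le\lambda_1\le q-1$ and $\lambda\subseteq(q-1)\times(q-1)$. Here no horizontal or vertical strip of size $q$ exists, so the admissible content $\nu$ must be genuinely two-dimensional, and \emph{this is the main obstacle}. My plan is to split $\lambda$ across its rows. If some partial sum $\lambda_1+\cdots+\lambda_a$ equals $p$, then $\mu=(\lambda_1,\ldots,\lambda_a)$ and $\nu=(\lambda_{a+1},\ldots,\lambda_\ell)$ are both partitions, $\lambda/\mu$ is the straight shape $\nu$, and $c^\lambda_{\mu,\nu}=1$, so we succeed as soon as both blocks are good. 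Otherwise, taking $a$ maximal with $\lambda_1+\cdots+\lambda_a\le p$ and $r=p-(\lambda_1+\cdots+\lambda_a)$, one sets $\mu=(\lambda_1,\ldots,\lambda_a,r)$, a sub-partition of size $p$ (note $0\le r<\lambda_{a+1}\le\lambda_a$), whose skew complement still reads off a Littlewood--Richardson filling of a two-dimensional content $\nu$. The real work is to choose the split — adjusting $a$, or shifting a single cell between the last row of $\mu$ and the first row of $\nu$ — so that $\mu$ and $\nu$ avoid their forbidden near-hooks \emph{simultaneously}. This simultaneous avoidance is exactly what breaks down for the small exceptional pairs, which is why $p=q$ and $p=5,q=3$ are excluded from the hypotheses; I expect this bookkeeping to be the hardest and least illuminating part of the argument.

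Finally, the threshold pairs — $p=q+1$ with $q\ge 4$, the family $p\ge 6,\,q=3$, and the single case $p=4,\,q=3$ — are handled by the same two constructions, checking the handful of boxed shapes by hand. For $q=3,4$ the box $(q-1)\times(q-1)$ contains very few partitions of size $p+q$; for instance when $q=4$ the only boxed shape is $(3,3,3)$, for which $\mu=(3,1,1)$ and $\nu=(2,2)$ is admissible. Assembling the wide case, its conjugate, and the boxed case then produces an admissible pair, hence shows $s_\lambda$ occurs in $g_pg_q$, for every $\lambda\vdash p+q$, as claimed.
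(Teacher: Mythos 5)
Your overall strategy (classify $\lambda$ as ``wide'' or ``boxed'' and build an admissible pair $(\mu,\nu)$ directly) differs from the paper's, but as written it has a genuine gap, and the gap sits exactly where the lemma's real difficulty lies. The wide case fails on an infinite family: take $\lambda=(q,q,1)$ with $p=q+1$ (allowed, since $p>q\geq 4$). This $\lambda$ satisfies your normalisation $\lambda_1\geq\ell(\lambda)$ and is wide ($\lambda_1=q$), but the interlacing conditions $\lambda_{i+1}\leq\mu_i\leq\lambda_i$ force $\mu_1=q$, and then $|\mu|=p=q+1$ forces $\mu_2=1$, $\mu_3=0$; the horizontal-strip removal is \emph{unique} and equals $(q,1)=(p-1,1)$, a forbidden shape. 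Your repair --- ``remove one extra cell from a lower row and one fewer from the first row'' --- cannot operate here, because the first row contributes zero removable cells, and your fallback (``finitely many small $\lambda$ \ldots checked directly'') does not apply, since $(q,q,1)$ is one such $\lambda$ for every $q\geq 4$. To handle this family one is forced outside the horizontal-strip framework entirely: for instance $\bar\mu=(q-1,2)$ with the two-row content $\bar\nu=(q-2,2)$ gives $c^{\lambda}_{\bar\mu,\bar\nu}>0$. Producing such two-dimensional contents in general is precisely the hard content of the lemma, not an edge adjustment.

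The boxed case has the same problem in a more visible form: you state that the ``real work'' is to choose the split so that $\mu$ and $\nu$ avoid their forbidden near-hooks simultaneously, and you defer it as bookkeeping. But that deferred step, together with the repair missing from the wide case, \emph{is} the proof. The paper proceeds differently: it first uses Lemma 3.3 to get some $\mu\subset\lambda$ with $s_\mu$ in $g_p$, notes that then some content $\nu$ works, reduces by conjugation symmetry to the single problematic content $\nu=(q-1,1)$, and then carries out a long case analysis of the skew shapes $\lambda/\mu$ admitting only such fillings --- repairing each configuration by a single-cell exchange $\bar\mu=\mu\cup\{c\}\setminus\{c'\}$ and a re-filling of weight $(q)$, $(q-2,2)$, $(q-2,1,1)$ or $(1^4)$, with separate treatment of $q=3$. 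Your proposal replaces that analysis with two constructions, one of which provably fails on $(q,q,1)$ and the other of which is left unproved, so the argument as it stands does not establish the lemma.
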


Again, the conditions in the hypothesis are tight: one checks by direct computation that $s_\lambda$ does not appear in $g_5 g_3$ for $\lambda= (4^2), (2^4)$, and does not  appear in $g_4^2$ for $\lambda=(6,1^2), (3,1^5), (4,2,1^2)$ and $(3^2,2).$ However $g_4 g_3$ does contain all the irreducibles; this is checked by direct computation. Finally $g_3^2$ does not contain $s_\lambda$ for $\lambda=(3,2,1).$


We now state conditions, one necessary and two sufficient,  for a conjugacy class to be global.

\begin{prop}  Let $\lambda$ be a partition of $n.$
\begin{enumerate}
\item  If $\lambda$  indexes a global conjugacy class, then all its parts must be odd and distinct, and it must have at least two parts.
\item
If $n-1$ is a prime greater than or equal to 5,  the class indexed by the partition $(n-1,1)$ is a global conjugacy class for $S_n.$ 
\item If $\lambda$ is a partition not containing 1 which indexes a global conjugacy class for $S_n,$ then the partition $\lambda\cup \{1\}$ indexes a global conjugacy class for $S_{n+1}.$
\end{enumerate}
These facts also hold for the twisted conjugacy action of $S_n.$
\end{prop}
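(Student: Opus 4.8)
The plan is to read conditions (1), (2), (3) off the Frobenius characteristics $H_\lambda[F]=\prod_i h_{m_i}[f_i]$ and $E_\lambda[F]=\prod_i e_{m_i}[f_i]$, treating the necessary condition by the representation theory of the centraliser and the two sufficient conditions as short reductions to Lemma 3.4 and the Pieri rule. For (1), I would use that the conjugation module on the class of cycle type $\lambda$ is $\mathrm{Ind}_{Z_\lambda}^{S_n}\mathbf 1$, where $Z_\lambda=\prod_{i}(C_i\wr S_{m_i})$ is the centraliser of such an element. By Frobenius reciprocity the multiplicity of $s_{(1^n)}=\mathrm{sgn}$ equals $\langle\mathbf 1,\mathrm{sgn}|_{Z_\lambda}\rangle$, which is $1$ if $Z_\lambda\subseteq A_n$ and $0$ otherwise. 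A generating $i$-cycle of $C_i$ has sign $(-1)^{i-1}$, while a transposition of two $i$-blocks in $S_{m_i}$ has sign $(-1)^{i}$, so $Z_\lambda\subseteq A_n$ forces every part $i$ with $m_i\ge1$ to be odd and every $m_i\le1$; that is, $\lambda$ has distinct odd parts. Since a global class must contain $\mathrm{sgn}$, this yields oddness and distinctness.

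To rule out a single part I would show $s_{(n-1,1)}$ is absent from $f_n$, which is the characteristic of the class $(n)$. Writing $s_{(n-1,1)}=h_{(n-1,1)}-h_n$ by Jacobi--Trudi, the multiplicity is $\langle f_n,h_{(n-1,1)}\rangle-\langle f_n,h_n\rangle$. Here $\langle f_n,h_n\rangle=1$ is the trivial multiplicity, and $\langle f_n,h_{(n-1,1)}\rangle$ counts $C_n$--$S_{n-1}$ double cosets in $S_n$, hence the orbits of the $n$-cycle on the $n$ cosets $S_n/S_{n-1}$; the $n$-cycle being transitive, this is also $1$. Thus $s_{(n-1,1)}$ occurs with multiplicity $1-1=0$, so $(n)$ is not global and any global $\lambda$ has at least two parts.

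For (2), with $p=n-1$ an odd prime $\ge5$ the class $(p,1)$ has characteristic $f_p\,s_{(1)}$. Proposition 3.1 shows $f_p-g_p$ is Schur-positive, so $f_p\,s_{(1)}$ dominates $g_p\,s_{(1)}=A_1g_p$ coefficientwise, and Lemma 3.4 (case $q=p\ge5$) says $A_1g_p$ contains every $s_\lambda$ with $\lambda\vdash p+1$; the claim follows. For (3), since $\lambda$ is global it is distinct by (1), so $\lambda\cup\{1\}$ (still distinct, as $\lambda$ omits $1$) has characteristic $\bigl(\prod_i f_{\lambda_i}\bigr)s_{(1)}$. Given $\nu\vdash n+1$, removing a corner cell gives $\mu\vdash n$ with $s_\nu$ occurring in $s_\mu s_{(1)}$ by Pieri; since $s_\mu$ occurs in $\prod_i f_{\lambda_i}$ by globalness of $\lambda$, the Schur function $s_\nu$ occurs in the characteristic of $\lambda\cup\{1\}$.

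For the twisted action the characteristic is $E_\lambda[F]=\prod_i e_{m_i}[f_i]$, which equals $H_\lambda[F]$ when $\lambda$ is distinct, so (2) and (3) transfer verbatim. For the twisted form of (1), distinctness is forced by the trivial rather than the sign: the multiplicity of $s_{(n)}$ factors as $\prod_i\langle e_{m_i}[f_i],s_{(im_i)}\rangle$, and applying the specialisation $p_k\mapsto1$ (under which $p_k[f_i]\mapsto\tfrac1i\sum_{d\mid i}\phi(d)=1$) to $\sum_m e_m[f_i]\,t^m=\exp\bigl(\sum_{k\ge1}\tfrac{(-1)^{k-1}}{k}t^k\,p_k[f_i]\bigr)$ gives $\sum_m\langle e_m[f_i],s_{(im)}\rangle t^m=\exp(\log(1+t))=1+t$, so $e_m[f_i]$ contains the trivial only for $m\le1$; once distinctness holds, $E_\lambda[F]=\prod_i f_{\lambda_i}$ and oddness and the two-part condition follow exactly as above. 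I expect the necessity statement (1) to be the main obstacle, specifically isolating the correct missing irreducible in each regime (the sign for distinct-but-even parts, the trivial for repeated parts in the twisted case, and $s_{(n-1,1)}$ for a single part) and carrying out the attendant multiplicity computations; the sufficiency parts are routine once Lemma 3.4 and the Pieri rule are available.
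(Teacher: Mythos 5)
Your proof is correct and follows essentially the same route as the paper: necessity is read off from the sign (and, in the twisted case, trivial) constituents of $\prod_i h_{m_i}[f_i]$ and $\prod_i e_{m_i}[f_i]$ --- your centraliser formulation $\mathrm{Ind}_{Z_\lambda}^{S_n}\mathbf{1}$ is the paper's reciprocity argument in group-theoretic dress --- while (2) and (3) are the same reductions to Proposition 3.1 with Lemma 3.4, and to the Pieri rule. If anything, you supply two details the paper leaves implicit: the double-coset count showing $s_{(n-1,1)}$ is absent from $f_n$ (which rules out one-part classes), and the generating-function verification that the trivial representation occurs in $e_m[f_i]$ only for $m\le 1$.
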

\begin{proof}   For Part (1),  recall from Section 2 that the Frobenius characteristic of the conjugacy action on the class indexed by $\lambda$ is the product of plethysms $\prod_i h_{m_i}[f_i]$ where the part $i$ occurs $m_i$ times in $\lambda.$ By restricting the sign representation to an $i$-cycle, it follows easily by reciprocity that if $i$ is even then $f_i$ does not contain $s_{(1^i)}. $ 
 One way to see this is to invoke Proposition 2.1, which shows that the sign representation $s_{(1^n)}$ occurs in a product of two Schur functions if and only if the latter are both equal to the sign representation.  Thus if $m_i\geq 2$ or $i$ is even, the sign representation cannot occur in $h_{m_i}[f_i]$ and  the claim follows.   For the twisted conjugacy action, one checks that the sign representation occurs in $e_{m_i}[f_i]$ if and only if $i$ is odd, and the trivial representation occurs if and only if $m_i=1$; hence the claim.   Part (2) follows from Proposition 3.1 and  Lemma 3.4, with $q=n-1,$ since $n\geq 6$ and the representation $f_{n-1}$ contains the representation $g_{n-1}.$   For Part (3), first note that if $F_\lambda$ denotes the representation on the conjugacy class indexed by $\lambda,$  and 1 is NOT a part of $\lambda,$ then the representation on the class $\lambda\cup\{1\}$ is obtained by inducing up and thus its Frobenius characteristic is $F_\lambda\cdot f_1.$ Now the claim follows by reciprocity:
 it suffices to observe, (using the notation of Definition 3.2), that $F_\lambda$ contains $A_n$ if $\lambda$ is a global class, since $A_n$ contains all irreducibles,  and  the multiplicity of $s_\mu$ in  $A_n\cdot f_1$ equals the inner product $\langle A_n, s_{\mu/(1)}\rangle,$ which is nonzero by definition of  $A_n$. \end{proof}


In order to apply Proposition 3.1 to the proof of Theorem 1.1, we will need a number-theoretic result on the representation of an integer as a sum of primes due to  R. Dressler.  We state Dressler's result as a proposition.

\begin{prop} \cite{D} Every positive integer $n\neq 1,2,4,6,9$ can be written  as a sum of distinct odd primes. 
\end{prop}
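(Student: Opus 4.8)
The statement is purely number-theoretic, and the natural route is strong induction on $n$, with a prime in a dyadic-type interval driving the inductive step. First I would pin down the five exceptions by elementary size-and-parity considerations. Every odd prime is at least $3$, so $1$ and $2$ are not representable; a sum of two or more distinct odd primes is at least $3+5=8$, so the even numbers $4$ and $6$ (each even and smaller than $8$, hence neither a single odd prime nor a multi-term sum) are out; and $9$, being odd and composite but smaller than the minimal three-term sum $3+5+7=15$, cannot be written either. The same bookkeeping shows that the representable set is exactly $G=\{n\ge 3:n\notin\{4,6,9\}\}$, and I would confirm by direct exhibition that every element of $G$ up to a modest explicit bound $N_0$ is a sum of distinct odd primes.

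For the inductive step, take $n>N_0$ and assume every element of $G$ below $n$ is representable. If $n$ is itself prime we are finished, so suppose not, and seek an odd prime $p$ with $n/2<p<n$. Writing $r=n-p$ we have $0<r<n/2<p$, and the decisive observation is that \emph{distinctness is then automatic}: any odd primes summing to $r$ are each at most $r<p$, so none equals $p$, whence $n=p+r$ is a sum of distinct odd primes as soon as $r\in G$. A parity split reduces the remaining task to almost nothing. If $n$ is odd then $r$ is even, and the only even non-elements of $G$ are $2,4,6$; so it suffices to take \emph{any} prime $p\in(n/2,\,n-8]$, which forces $r\ge 8$ and hence $r\in G$. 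If $n$ is even then $r$ is odd, and the only odd non-element of $G$ exceeding $1$ is $9$; so it suffices to take a prime $p\in(n/2,\,n-3]$ with $p\neq n-9$.

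The one genuine difficulty is therefore quantitative rather than structural: I must ensure that for all $n>N_0$ the interval $(n/2,\,n-8]$ contains a prime (odd case) and the interval $(n/2,\,n-3]$ contains at least two primes (even case, so that one survives the single exclusion $p=n-9$). Bertrand's postulate alone places a prime in $(n/2,n)$ but does not keep it below $n-8$, so I would instead invoke an effective lower bound for $\pi(x)-\pi(x/2)$ (for instance via the Rosser--Schoenfeld estimates, or by subdividing $(n/2,n)$ into subintervals each long enough to apply Bertrand), valid for all $n$ past an explicit threshold, and then set $N_0$ to be at least that threshold. With the threshold fixed, the finitely many base cases are a routine verification, the induction closes, and one obtains simultaneously both the representability of every element of $G$ and the exactness of the exceptional set $\{1,2,4,6,9\}$.
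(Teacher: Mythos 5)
This proposition is not proved in the paper at all: it is Dressler's theorem, quoted with a citation to \cite{D} and used as a black box. So the real comparison is with Dressler's own argument, and there your proposal is essentially a reconstruction of it. Dressler's paper is titled ``A stronger Bertrand's postulate with an application to partitions'' precisely because the scheme you describe --- subtract a prime $p$ with $n/2<p<n$, observe that distinctness is then \emph{automatic} (every prime in a representation of $r=n-p$ is at most $r<n/2<p$), and split by parity so that only the exceptions $2,4,6$ (even $r$) and $1,9$ (odd $r$) must be dodged --- forces one to find a prime in an interval strictly shorter than Bertrand's, namely bounded away from $n$ by an additive constant. Your structural work is correct: the identification of the exceptional set $\{1,2,4,6,9\}$, the automatic-distinctness observation, and the reduction of the induction step to producing a prime in $(n/2,\,n-8]$ when $n$ is odd and a prime in $(n/2,\,n-3]$ other than $n-9$ when $n$ is even.

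One caveat on the analytic input. Of the two routes you offer for producing such primes, the parenthetical one --- subdividing $(n/2,n)$ into subintervals each long enough to apply Bertrand --- cannot work: Bertrand's postulate requires an interval of the form $(m,2m)$, and no proper subinterval of $(n/2,n)$ has its right endpoint at least twice its left endpoint. This is exactly why plain Bertrand is insufficient and why Dressler had to prove a sharper prime-gap statement. Your primary suggestion, however, does close the argument: any effective bound giving $\pi(n)-\pi(n/2)\geq 8$ for all $n$ past an explicit threshold (Rosser--Schoenfeld, or Nagura's theorem that $(x,\,6x/5)$ contains a prime for $x\geq 25$) suffices, because the primes you must avoid occupy at most seven integers at the top of the interval in the odd case and three in the even case; a finite verification of the base cases then completes the induction. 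So the proposal is a correct strategy whose quantitative core is outsourced to a standard but genuinely nontrivial effective prime-counting estimate --- the same division of labor as in the source the paper cites.
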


\begin{cor}  Every even integer $n\geq 8$ can be written as a sum of at least two distinct odd primes, and every odd integer $n\geq 9$ can be written as a sum of 1 and  at least two distinct odd primes.
\end{cor}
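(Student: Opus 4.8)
The plan is to derive both statements from Dressler's result (Proposition 3.7) by a short parity argument, reducing the odd case to the even case.

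For the first statement, let $n\geq 8$ be even. First I would check that $n$ avoids the exceptional set $\{1,2,4,6,9\}$ of Proposition 3.7: every even integer $n\geq 8$ exceeds $6$ and is not equal to $9,$ so Dressler's theorem applies and $n$ is a sum of distinct odd primes, say $n=p_1+\cdots+p_k.$ It remains only to see that $k\geq 2.$ This is the one point that genuinely requires an argument: a sum of $k$ distinct odd primes is congruent to $k$ modulo $2,$ since each summand is odd; as $n$ is even we must have $k$ even, and since $n>0$ we cannot have $k=0,$ so $k\geq 2.$ Equivalently, a single odd prime is odd, so a one-term representation is impossible for even $n.$

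For the second statement, let $n\geq 9$ be odd. I would simply observe that $n-1$ is then even with $n-1\geq 8,$ so the first statement applies to $n-1$ and yields $n-1=p_1+\cdots+p_k$ with $k\geq 2$ distinct odd primes. Adjoining the summand $1$ gives $n=1+p_1+\cdots+p_k,$ the desired expression as $1$ plus at least two distinct odd primes. The smallest case $n=9$ poses no difficulty, since we apply Dressler to $n-1=8=3+5$ rather than to $9$ itself; thus the exclusion of $9$ from Proposition 3.7 is irrelevant here.

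The deduction is entirely elementary, and I do not expect a serious obstacle. The only points demanding care are the parity bookkeeping that forces at least two primes in the even case, and the verification that the inputs to Proposition 3.7 (the even integers $n\geq 8,$ respectively $n-1\geq 8$) indeed avoid its short list of exceptions.
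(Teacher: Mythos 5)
Your proof is correct and follows essentially the same route as the paper: the even case is Dressler's theorem (with the parity observation that an even number cannot be a single odd prime, which the paper leaves implicit but you rightly spell out), and the odd case is handled by applying the even case to $n-1$ and adjoining the part $1$.
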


\begin{proof} The first statement is simply Dressler's result above.  For odd $n\geq 9$ it suffices to apply the theorem  to the even integer $n-1,$ which must be  a sum of at least two distinct odd primes.   \end{proof}

Recall that $f_n$ denotes the Frobenius characteristic of the representation $1\uparrow_{C_n}^{S_n}$ of $S_n,$ where $C_n$ is a  cyclic subgroup  of order $n,$   i.e., of the conjugacy action on $n$-cycles.  

We now have all the ingredients necessary to complete the 

\vskip .1in

\noindent
{\bf Proof of Theorem 1.1:} Recall that if $\lambda$ consists of distinct parts $\lambda_i,$ the Frobenius characteristic of both the ordinary and twisted conjugation action on the conjugacy class indexed by $\lambda$ is simply the product $\prod_i f_{\lambda_i}.$

 Let $n\geq 10.$ By Corollary 3.8, there is  a partition  $\lambda$ of $n$ with distinct parts  taken from the set containing 1 and the odd primes, such that  the number of odd parts greater than 1 is at least two. From Proposition 3.1 we know that, as representations,  $f_{\lambda_i}$ contains $g_{\lambda_i}$ for every part $\lambda_i$ of $\lambda.$ 

Lemma 3.5 tells us when the product of two $g_k$'s will contain all irreducibles, and Lemma 3.4 tells us when a product of the form $A_{n-k} g_k$ will contain all irreducibles.
We will apply Lemma 3.5 once, and then  repeatedly invoke Lemma 3.4. For the two largest parts, we have $p=\lambda_1>q=\lambda_2\geq 3.$   Invoking Lemma 3.5, (noting that if $q=3$ then $p\geq 7$), we see that $s_\mu$ appears in the product $f_p f_q$ for every partition $\mu$ of $p+q.$  Equivalently, 
$f_p f_q$ contains $A_{p+q}$ as representations. 
Denote the  remaining smaller parts of $\lambda$ (if any)  by $\mu_1> \ldots > \mu_r\geq 1$; i.e., $\lambda=\{p,q\}\cup \mu.$
Consider the product $F_k=f_p f_q \left(\prod_{ i=1}^k f_{\mu_i}\right),$ for $k=1, \ldots, r.$  We claim that, as a representation,  $F_k$ contains all irreducibles.   Because all integers involved are distinct, $p\geq q+1$ and $q\geq \mu_1+1$,  so  $p+q\geq 2\mu_1+3>\mu_1+2.$  Hence we can invoke Lemma 3.4  to conclude that $F_1=(f_p f_q)\cdot  f_{\mu_1}$  contains all irreducibles, since it contains the product $A_{p+q} g_{\mu_1},$ i.e., it  contains $A_{p+q+\mu_1}$ as a representation.  Now iterate Lemma 3.4. For the inductive step note that $p+q+\sum_{i=1}^k \mu_i >\mu_{k+1}+2$ for all $k<r.$ We conclude that $F_k$ contains all irreducibles. 

When $n<10,$ 
one has, from Proposition 3.6 (3) and Lemma 3.4: 
\begin{enumerate}
\item For each of the  cases $n=6,8,9$, the following partitions respectively index a unique global conjugacy class:  $(5,1), (7,1), (5,3,1).$ (See the remarks following Lemma 3.5:  the product $g_5g_3$ does not contain all irreducibles.)
\item For $2\leq n\leq 5$ and $n=7,$ there is no global conjugacy class.
\end{enumerate}
This finishes the proof of Theorem 1.1. \qed

The above argument in fact establishes the global property of containing every irreducible for a general class of representations.
Let $W_n$ be the $S_n$-module whose Frobenius characteristic is $g_n.$ For any partition $\lambda$ of $n,$ let $W_\lambda$ be the $S_n$-module whose Frobenius characteristic is the product $\prod_i g_{\lambda_i}.$  It is  immediate from Lemmas 3.4 and 3.5 that $W_\lambda$ contains all irreducibles in each of the cases below.  
\begin{enumerate} 
\item  $\lambda=(1^n),$ or 
  $\lambda$ contains the part 1 and also a part $k$ for some $k\neq 3,4$  or 
 \item $\lambda$ contains both 3 and 4 as parts, or
 2 is a part of $\lambda,$ or 
\item  $\lambda$ has at least two {\bf distinct} parts $p>q\geq 4,$ or $p\geq 6, q=3.$
\end{enumerate}
(For Part (3)  use Lemma 3.5 and the remark following it to conclude that for the smallest pair of parts $s>t,$ we have that $g_s g_t$ contains $A_{s+t}$, and $s+t\geq 7;$ now iterate Lemma 3.4 as above.) 
%
Checking that $g_1 g_3^2, g_3^3$ and $ g_4^3$ contain all irreducibles, while $g_1g_4^2$ lacks $s_{(3^3)},$ we have:
\begin{thm} Every irreducible appears in $W_\lambda$  for all $\lambda$ with at least two parts {\bf except} the following six partitions:
$\lambda=(3,1), (4,1), (3,3), (4,4), (5,3), (4,4,1).$ 
\end{thm}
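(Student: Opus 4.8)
The plan is to establish Theorem 3.10 by combining the sufficiency conditions already proved in Lemmas 3.4 and 3.5 to dispose of \emph{all but finitely many} partitions $\lambda$ with at least two parts, and then to verify the remaining finite list of exceptional cases by direct computation. Recall that $W_\lambda$ has Frobenius characteristic $\prod_i g_{\lambda_i}$, so ``$W_\lambda$ contains all irreducibles'' is precisely the statement that $\prod_i g_{\lambda_i}$ contains $s_\mu$ for every partition $\mu$ of $n=|\lambda|$. The engine is the observation, recorded in the enumerated list preceding the theorem, that whenever $\lambda$ meets one of the structural conditions (1)--(3) there, $W_\lambda$ already contains every irreducible; so the substance of the proof is to show that the six listed partitions are \emph{exactly} the $\lambda$ (with at least two parts) falling outside all three conditions.

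First I would carry out the combinatorial case analysis that reduces to a finite list. Condition (1) handles every $\lambda$ containing the part $1$ together with some part $k\neq 3,4$, as well as $\lambda=(1^n)$; condition (2) handles every $\lambda$ containing $2$, or containing both $3$ and $4$; condition (3) handles every $\lambda$ having a pair of distinct parts $p>q\geq 4$ or $p\geq 6,\,q=3$. So a partition $\lambda$ with at least two parts escapes all three conditions only if: it contains no $2$; its parts $\geq 2$ lie in $\{3,4\}$ but not both; and if $1$ is a part then the unique other part-value is $3$ or $4$. Sorting through these constraints, the survivors are the partitions whose nonunit parts are all $3$'s, or all $4$'s, with a controlled number of parts and at most the additional part $1$ under the extra restrictions forced by conditions (1) and (3). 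This bookkeeping produces a short explicit list; the six partitions in the statement are those survivors that in fact fail, while the parenthetical remark that $g_1g_3^2$, $g_3^3$, and $g_4^3$ contain all irreducibles rules out the borderline survivors $(3,3,1)$, $(3,3,3)$ and $(4,4,4)$, confirming they are \emph{not} exceptions.

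Next I would confirm the failures. For the six claimed exceptions one exhibits a specific missing Schur function. The smallest cases $(3,1)=A_1g_3$ and $(4,1)=A_1g_4$ are settled by the remark following Lemma 3.4: $g_3 s_{(1)}$ omits $s_{(2,2)}$ and $g_4 s_{(1)}$ omits $s_{(3,1^2)}$. The pairs $(3,3)=g_3^2$, $(4,4)=g_4^2$, and $(5,3)=g_5g_3$ are exactly the failures recorded in the remark following Lemma 3.5, namely $g_3^2$ lacks $s_{(3,2,1)}$, $g_5g_3$ lacks $s_{(4^2)}$ and $s_{(2^4)}$, and $g_4^2$ lacks $s_{(6,1^2)}$ and others. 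The one genuinely new check is $(4,4,1)$, for which $g_1g_4^2$ lacks $s_{(3^3)}$; this is the computation flagged in the sentence introducing the theorem. Each of these is a finite plethysm/Littlewood--Richardson calculation verifiable by hand or machine.

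The main obstacle is not any single computation but the reduction step: one must be scrupulous that conditions (1)--(3) genuinely cover \emph{every} partition with two or more parts outside the six exceptions, with no gap. The danger lies in the interaction of the hypotheses of Lemma 3.5 (which requires $p>q\geq 4$, or $p\geq 6,q=3$, or $p=4,q=3$, and which notably \emph{excludes} $p=5,q=3$) with the one-part-equal-to-$1$ restriction of Lemma 3.4 (which excludes $q=3,4$ from the $A_1 g_q$ case). The borderline survivors $(3,3,1)$, $(3,3,3)$, $(4,4,4)$, and $(4,3)$ must each be argued through the correct lemma or the explicit auxiliary computations, and one must check that no partition of the form $(4^a,1)$ or $(3^a,1)$ with $a\geq 2$ slips through; here repeated application of Lemma 3.4 starting from the base products $g_3^2\supset A_6$, $g_4^2$, $g_3^3\supset A_9$, or $g_4^3$ closes the remaining infinite families, since once a product contains $A_m$ with $m\geq 3$ each further factor $g_{\mu_i}$ with $\mu_i\in\{1,3,4\}$ preserves the global property via Lemma 3.4.
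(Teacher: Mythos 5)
Your overall strategy is the paper's: use the three sufficient conditions distilled from Lemmas 3.4 and 3.5 to dispose of most partitions, then settle a residual list by the explicit computations ($g_1g_3^2$, $g_3^3$, $g_4^3$ contain everything; $g_1g_3$, $g_1g_4$, $g_3^2$, $g_4^2$, $g_5g_3$, $g_1g_4^2$ do not). Your verification of the six failures is fine, as is the closing of the families $(3^a)$, $(4^a)$, $(3^a,1)$, $(4^a,1)$ by iterating Lemma 3.4 from the base products. The problem is the reduction step, which you yourself single out as the main obstacle: it is carried out incorrectly, and the error leaves infinitely many cases unproved. Your claim that a partition escaping conditions (1)--(3) must have all its parts $\geq 2$ in $\{3,4\}$ is false. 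Condition (3) applies only to pairs of parts with \emph{distinct} values ($p>q\geq 4$, or $p\geq 6,\,q=3$), and Lemma 3.5 is stated only for $p>q$. Consequently the rectangles $(m^a)$ with $m\geq 5$ (e.g.\ $(5,5)$, $(7,7)$, $(5,5,5)$) and the mixed shapes $(5^a,3^b)$ with $a+b\geq 3$ (e.g.\ $(5,5,3)$, $(5,3,3)$) escape all three conditions; they are not in your list of survivors, they are not among the six exceptions, and the theorem therefore asserts they contain every irreducible --- yet your proposal offers no argument for them. The inconsistency is visible inside the proposal itself: $(5,3)$, one of the six exceptions you must account for, already violates your characterization of the survivors (its part $5$ is not in $\{3,4\}$), which should have signalled that the bookkeeping went wrong.

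Closing these cases is not a routine application of the quoted lemmas. For $(m^a)$, $m\geq 5$, one needs $g_m^2\supseteq A_{2m}$, i.e.\ an extension of Lemma 3.5 to equal parts $p=q\geq 5$; this is plausible but not automatic, since Subcase 3c of the paper's proof of Lemma 3.5 explicitly uses $p\geq(q-1)+2$, which fails when $p=q$. For $(5,3,3)$ one cannot even start the iteration of Lemma 3.4, because no subproduct contains a full $A_m$: $g_3^2$ misses $s_{(3,2,1)}$ and $g_5g_3$ misses $s_{(4,4)}$ and $s_{(2^4)}$; so $g_5g_3^2\supseteq A_{11}$ requires a separate Littlewood--Richardson argument (or computation). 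Once $g_5^2$ and $g_5g_3^2$ are handled, the remaining members of these families do follow by iterating Lemma 3.4, but as written your proof has a genuine gap exactly where its logical weight rests. (For comparison: the paper's own derivation of this theorem is a one-line appeal to the same conditions plus the same four checks, and it is equally silent on these families; but it never commits to your explicit --- and false --- classification of the escaping partitions.)
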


\section{Proof of  Lemma 3.5}

In this section we prove Lemma 3.5 via a case-by-case analysis. The case $p=4, q=3$ is checked by direct computation of the Schur function expansion of $g_4g_3,$ so we will confine ourselves to the other cases.

\begin{proof} Let $\lambda$ be any partition of $p+q.$ If there are partitions $\mu$ of $p$ and $\nu$ of $q$ such that $\mu\notin \{(p-1,1), (2, 1^{p-2})\} $ and $\nu\notin \{(q-1,1), (2, 1^{q-2})\}$ and $s_\lambda$ appears in the Schur function expansion of the product 
$s_\mu\cdot s_\nu,$ then $s_\lambda$ appears with positive coefficient in the product $g_p g_q.$  Our goal is to show that such partitions $\mu,\nu$ always exist, using Proposition 2.1.

We may assume $\lambda\notin \{(n-1,1), (2, 1^{n-2})\},$  by observing that 
 $s_{(n-1,1)}$ appears in the product $s_{(p)} s_{(q)}$ and 
$s_{(2, 1^{n-2})}$ appears in the product $s_{(1^p)} s_{(1^q)}.$ 

By Lemma 3.3, there is a  $\mu\subset \lambda$ such that $s_\mu$ appears in $g_p.$ Hence there is at least one partition $\nu$ of $q$ such that   $s_\lambda$ appears in the Schur function expansion of the product $g_p s_\nu.$ 

We need to take care of the cases $\nu=(q-1,1), (2, 1^{q-2}).$ The case $\nu=(2, 1^{q-2})$ for $\lambda$ follows from the conjugate case $\nu=(q-1,1)$ for $\lambda^t,$ since the sum $g_k$ is self-conjugate. Thus it suffices to consider $\nu=(q-1,1)$.  
  By analysing the possible configurations of $\lambda/\mu,$ we show that it is possible to find a $\bar\nu$ which is neither $(q-1,1)$ nor its transpose, and a partition $\bar\mu$ of $p$ such that $\bar\mu$  is neither $(p-1,1)$ nor its transpose,
 for which $s_\lambda$ appears in the product $s_{\bar\mu} s_{\bar\nu}.$  Equivalently, 
the LR-coefficient $c_{\bar\mu, \bar\nu}^{\lambda}$ is positive.

Suppose $\lambda/\mu$ is a horizontal strip, i.e., no two squares occur in the same column.  In this case clearly the skew-shape can be filled with all 1's, which is a lattice permutation, and hence 
$c^\lambda_{\mu, (q)}\neq 0.$ 

Henceforth we assume $\lambda/\mu$ is NOT a horizontal strip. 
We will treat the cases $q\geq 4$ and $q=3$ separately.  First let $q\geq 4.$  We shall further distinguish between a connected shape $\lambda/\mu$ (Cases 1-3 below) and a disconnected one (Cases 4-6 below).

We consider first the case when the skew-shape is connected (and $q\geq 4$). 
 The most general connected configuration admitting a lattice permutation of weight $(q-1,1)$ is as in Figure 1 below, where the skew-shape is connected, occupies two consecutive rows,  and has exactly one pair of squares appearing in the same column.  Let us call the inner corner cell $d.$ If $d$ appears in row $i$ and column $j$ of the Ferrers diagram of $\lambda,$ let  $c_1$ be the cell in row $i+1$ and column $ j-1,$ and let $c_2$ be the cell in row $i$ and column $ j+1.$  Note that at least one of these two cells must exist in $\lambda,$  in order to accommodate a lattice permutation of weight $(q-1,1), $ since $q\geq 4.$

\vskip .1in
\ytableausetup{centertableaux}
\begin{center}
\begin{tiny}
\begin{ytableau}
X &X  &X   &X &X &X &X   &X &X &X &X \\
X &X  &X &X &d &c_2 &\ldots   &\ldots &\ldots \\
 X &X  & \ldots  &c_1 &\ldots   \\ 
X  & X \\
\end{ytableau}\end{tiny}
\vskip.08in
\begin{small} \vbox{Figure 1:  $\lambda/\mu$  is NOT a horizontal strip, and admits \\a lattice permutation filling of weight $(q-1,1)$}\end{small} \end{center}
%
\begin{description} 
\item[{\bf Case 1}] Assume cells $c_1$ and $c_2$ both exist in $\lambda. $ Fill the first row of the skew-shape with all 1's. For the second row of the skew-shape,  put 2's in  the cell below $d$ and in cell  $c_1,$ and 1's for the remaining cells to the left of $c_1.$ This gives a lattice permutation, since there are at least two 1's preceding the 2's: both cell $d$ and cell $c_2$ are filled with 1's.  It has weight $(q-2, 2),$ and hence for $ \bar\nu=(q-2,2),$ we have $c_{\mu, \bar\nu}^\lambda\neq 0,$ and thus $s_\lambda$ appears in the product $s_\mu\cdot s_{\bar\nu},$  hence in $g_p\cdot g_q.$  

\item[{\bf Case 2}] Cell $c_2$ does not exist in $\lambda,$ so that we have the \lq\lq backwards L\rq\rq configuration of Figure 2a. 

\vskip .1in
\ytableausetup{centertableaux}
\begin{center}
\begin{tiny}
\begin{ytableau}
X &X  &X  &X  &X & X &X &X   &X &X &X &{ X} \\
X &X  &X &X &{\bf X'} &d  \\
 X &X  &{\bf y} & \ldots  &c_1 &   \\ 
X  & X \\
\end{ytableau}
\end{tiny} \begin{small} Figure 2a \end{small}\end{center}
\vskip .1in

In this case the cell in row $i+1$ and column $j$ (immediately below $d$) is a  cell on the south-east boundary of $\lambda.$ Let $X'$ be the cell in row $i$ and column $j-1,$ (i.e., immediately to the left of cell $d$) and let $y$ be the cell of $\lambda$ in row $i+1$ which is immediately adjacent to $\mu.$  Thus $y$ is a cell of $\lambda/\mu$, while $X'$ is a cell of $\mu.$ Note that $y\neq c_1$ since $\lambda/\mu$ has size $q\geq 4.$ Consider the partition $\bar\mu=\mu\cup \{y\}\backslash\{X'\},$ still a partition of $p.$ 
Then $\lambda/\bar\mu$ consists of a 2 by 2 square on the south-east boundary of $\lambda$ together with $(q-4)$ squares in row $(i+1).$ It can therefore be filled with a lattice permutation of weight $(q-2,2)$ (see Figure 2b). 

\vskip .1in
\ytableausetup{centertableaux}
\begin{center}
\begin{tiny}
\begin{ytableau}
X &X  &X &X   &X &X &X &X   &X &X &X &X \\
X &X  &X &X &{\bf 1} &{\bf 1}  \\
 X  &X &{\bf 1}  & {\bf 1}  &{\bf 2} &{\bf 2}   \\ 
X  &X\\
\end{ytableau}
\end{tiny}
 \begin{small} Figure 2b \end{small} \end{center}
\vskip .1in
Now we need only check that $\bar\mu$ is not of shape $(p-1,1)$ or $(2, 1^{p-2}).$ In the former case, we would be forced to conclude that $\bar\mu$ has only $(q-2)$ squares (since the cell $y$ can be the only cell of $\bar\mu$ in row $i+1$), a contradiction since $|\bar\mu|=|\mu|=p>q.$  In the latter case, the cell occupied by $y$ is a boundary cell of $\lambda,$ so the only way this can happen is if $\bar\mu=(2,1),$ and thus the size of $\bar\mu$ is $3.$ But then $p=3$ and $q\geq 4,$ a contradiction to the hypothesis that $p>q.$   Thus $\bar\mu$ is the shape we seek, and $s_\lambda$ appears in the product $s_{\bar\mu}\cdot s_{(q-2,2)},$ hence in $g_p\cdot g_q.$

\item[{\bf Case 3}] Cell $c_1$ does not exist in $\lambda,$ so that we have the configuration in Figure 3. 

\vskip .1in
\ytableausetup{centertableaux}
\begin{center}
\begin{tiny}
\begin{ytableau}
X &X  &X   &X &X  &X  &X   &X  &X &X &X &X\\
X &X  &X   &X &X &X &X   &X &X &X &X \\
X &X  &X &X &d &c_2 &\ldots   &\ldots &\ldots \\
 X &X  &X  &X &   \\ 
X  & X \\
\end{ytableau}
\end{tiny}
 \begin{small} Figure 3 \end{small} \end{center}
\vskip .1in

Here we have four sub-cases to consider: Recall that $d$ is in row $i,$ column $j.$
\begin{description}
\item[{\bf Subcase 3a}] Suppose that either $i\geq 3$ and $j$ is arbitrary, or $i=2$ and $j\geq 2,$  and assume there is a corner cell $X'$ of $\lambda$ occupying a column $j'> j+q-2,$ i.e., a corner cell that is strictly to the right  of the right-most cell in the skew-shape.  
In particular, $X'$ will be strictly to the right of the cell $c_2.$ See Figure 3a. 

\vskip .1in
\ytableausetup{centertableaux}
\begin{center}
\begin{tiny}
\begin{ytableau}
X &X  &X   &X &X  &X  &X   &X  &X &X &X &X\\
X &X  &X   &X &X &X &X   &X &X &X &{\bf X'} \\
X &X  &X &X &d & c_2 &\ldots   &\ldots &\ldots \\
 X &X  &X  &X &   \\ 
X  & X \\
\end{ytableau}
\end{tiny}
 \begin{small} Figure 3a \end{small} \end{center}
\vskip .1in

Take $\bar\mu=\mu\cup\{d\}\backslash\{X'\}.$  
 Then  $\bar\mu$ must contain a 2 by 2 square (this is clear if $i\geq 3;$ if $i=2,$ then since $j\geq 2,$ the cell $d$ is the bottom right corner of the 2 by 2 square.)  Thus  $\bar\mu$ cannot be one of the forbidden shapes,  and $\lambda/\bar\mu$ is a   horizontal strip.  
Hence $c^\lambda_{\bar\mu, (q)}\neq 0$ and $s_\lambda$ appears in the product $s_{\bar\mu}\cdot s_{(q)}.$ 
\item[{\bf Subcase 3b}] Again consider the case when either $i\geq 3$ and $j$ is arbitrary, or $i=2$ and $j\geq 2,$  but now assume  there is no corner cell of $\lambda$ occupying a  column $j'> j+q-2.$ Rows $1, \ldots, i$ of $\lambda$ must therefore all have the same length.  Let $X'$ be the cell in row $(i-1)$ and column 
$j+q-2,$ immediately above the right-most cell of the 
skew-shape $\lambda/\mu.$  Note that as long as  $q\geq 4,$ the right-most cell of the skew-shape is not $c_2.$ Take $\bar\mu=\mu\cup\{d\}\backslash\{X'\}$ exactly as in Subcase 3a. 
%
The new skew-shape can clearly be filled with a lattice permutation of weight $(q-2,2):$ see Figures 3b and 3b'. Note that  $\bar\mu$ is a  shape appearing in $g_p$, because it contains a 2 by 2 square as in Subcase 3a.  

\vskip .1in
\ytableausetup{centertableaux}
\begin{center}
\begin{tiny}
\begin{ytableau}
X &X  &X   &X &X  &X  &X   &X  &X \\
X &X  &X   &X &X &X &X   &X &{\bf X'}  \\
X &X  &X &X &d &c_2 &\ldots   &\ldots &\ldots \\
 X &X  &X  &X &   \\ 
X  & X \\
\end{ytableau}  \end{tiny} \begin{small} Figure 3b \end{small} \end{center}
\vskip .2in

\ytableausetup{centertableaux}
\begin{center}\begin{tiny}
\begin{ytableau}
X &X  &X   &X &X  &X  &X   &X  &X \\
X &X  &X   &X &X &X &X   &X &{\bf 1}  \\
X &X  &X &X &d &{\bf 1} &{\bf 1}   &{\bf 1} &{\bf 2} \\
 X &X  &X  &X & {\bf 2}  \\ 
X  & X \\
\end{ytableau}\end{tiny} \begin{small} Figure 3b' \end{small}
 \end{center}
\vskip .2in

\item[{\bf Subcase 3c}] Assume that $ i=2$ but $j=1;$ thus $\mu$ consists of only the first row of $\lambda,$ i.e., a  single row lying above the shape $(q-1,1),$ as in Figure 3c. 

\vskip .1in
\ytableausetup{centertableaux}
\begin{center}
\begin{tiny}
\begin{ytableau}
 X &X &X   &X &X &X &{\bf X''} &{\bf X'} \\
 d & c_2 &\ldots   &\ldots &\ldots \\
    \\ 
\end{ytableau}
\end{tiny}
 \begin{small} Figure 3c \end{small} \end{center}
\vskip .1in
 
 In this case, since $\mu$ has size  $p\geq (q-1)+2$, the single row of $\mu$ must exceed the next (second) row of $\lambda$ by at least 2  squares.  Let $X'$ and $X''$ be the two squares at the end of the first row, as in Figure 3c.  Taking $\bar\mu=\mu\cup\{d, c_2\}\backslash\{X', X''\},$ we see that $\lambda/\bar\mu$ is a horizontal strip, thus admitting a lattice permutation of weight $(q),$ and $\bar\mu$ has a 2 by 2 square. Hence in this case $s_\lambda$ appears in the product $s_{\bar\mu}\cdot s_{(q)},$ and $s_{\bar\mu}$ appears in $g_p.$
\item[{\bf Subcase 3d}] Now assume that  $i=1$ (and hence necessarily  $j\geq 2$),  i.e., the skew-shape spans the first two rows of $\lambda,$ as in Figure 3d.  

\ytableausetup{centertableaux}
\begin{center}\begin{tiny}
\begin{ytableau}
X &X  &X &X' &d &c_2 &\ldots   &\ldots &\ldots \\
 X &X  &X  &Y &   \\ 
X  & X \\
\end{ytableau}\end{tiny} \begin{small} Figure 3d \end{small}
  \end{center}
\vskip .1in 

Let $X'$ be the cell in row 1, column $j-1$ (i.e., immediately to the left of cell $d$) and let $Y$ be the  cell of $\mu$ just below $X'$. 

First suppose that  $X',Y$ are in column 1 of $\mu.$ See Figure 3d'.  

\vskip .1in
\ytableausetup{centertableaux}
\begin{center}\begin{tiny}
\begin{ytableau}
X' &d &c_2 &\ldots   &\ldots &\ldots \\
Y &   \\ 
X \\
X\\
X\\
Z_1\\
Z_2\\
\end{ytableau}\end{tiny} \begin{small} Figure 3d' \end{small}
 \end{center}
\vskip .1in 

This means $\mu=( 1^p);$ since $p\geq q+1\geq 5,$ there are at least 3 cells in column 1 of $\mu$ below the  cell $Y.$  Let $Z_1, Z_2$ be the two bottom-most cells in $\mu.$
Let $\bar\mu=\mu \cup \{d,c_2\}\backslash\{Z_1, Z_2\}.$ It is clear that this produces a partition $\bar\mu$ of $p$ of shape $(3, 1^{p-3})$ whose Schur function appears in $g_p$ because $p-3\geq 2.$ 
 The skew-shape $\lambda/\bar\mu$ consists of a horizontal strip (from the first two rows) of size $q-2$, of which one square (in row 2) is disconnected, and  a connected vertical strip of size 2, i.e., one that will admit a lattice permutation of weight $(q-2, 2).$  Hence $c^\lambda_{\bar\mu, (q-2,2)}\neq 0,$ and $s_\lambda$ appears in the product $s_{\bar\mu}\cdot s_{(q-2,2)}.$   See Figure 3d''.
\vskip .1in
\ytableausetup{centertableaux}
\begin{center}\begin{tiny}
\begin{ytableau}
X' &X &X &{\bf 1}   &{\bf 1} &{\bf 1} \\
Y &{\bf 2}   \\ 
X \\
X\\
X\\
{\bf 1}\\
{\bf 2}\\
\end{ytableau}\end{tiny} \begin{small} Figure 3d'' \end{small}
  \end{center}

%
\end{description}
\end{description}
We have taken care of all cases in which $q\geq 4$   and $\lambda/\mu$ is a connected skew-shape.

Now assume that the skew-shape $\lambda/\mu$ is disconnected (and $q\geq 4$).  Since $\lambda/\mu$ admits a lattice permutation of weight $(q-1,1),$ it has  two or more connected components, and all except possibly one consist of a single row.  We may assume that the exceptional component is either a vertical strip of size 2, or the shape $(k,1)$ for some $k\geq 2,$ or the \lq\lq backwards $L$\rq\rq skew-shape shown in Figure 2a; otherwise it looks like the skew-shape in Figure 1, with both $c_1$ and $c_2$ being nonempty, and is thus taken care of locally by the argument of Case 1.
\begin{description}
\item[Case 4] We will begin  with the  \lq\lq backwards $L$\rq\rq case   shown in Figure 2a. 
If the size of the skew-shape in Figure 2a is 4 or greater, the argument in Case 2 applies; note that the outcome  is unaffected by the other connected components since they are horizontal strips (filling them with all 1's gives a permissible  lattice permutation).   Hence we need only consider the case when the skew-shape shown in Figure 2a  is of size exactly 3.
\begin{description}
\item[Subcase 4a] We have the skew-shape $\lambda/\mu$ in Figure 4a, where there is a connected horizontal strip to the right of and above (i.e., north-east of) the  \lq\lq backwards $L$\rq\rq component, whose cells are  labelled $d, a_1, c_1$.  In this case there is clearly a lattice permutation filling of weight containing a 2 by 2 square, (fill all the components that are horizontal strips with 1's, a 1 in cell $d$ and 2's in cells $c_1$ and $a$).  (Note that $q\geq 4.$)  See Figure 4a'.  Thus $s_\lambda$ appears in $g_p\cdot g_q.$
\vskip .1in
\ytableausetup{centertableaux}
\begin{center}\begin{tiny}
\begin{ytableau}
X &X  &X &X   &X &X  &X  &X   &X  &X &X &X &X\\
X &X  &X &X   &X &X &X &X   & & & & \\
X &X  &X &X &X &d  \\
 X  &X &X  &X  &c_1 &a   \\ 
X &X &X &X\\
X  &X\\
\end{ytableau}\end{tiny}
\vskip.08in
\begin{small}\vbox{Figure 4a: Horizontal strip north-east of  the skew-shape $d,a,c_1.$ }\end{small}
\end{center}

\ytableausetup{centertableaux}
\begin{center}\begin{tiny}
\begin{ytableau}
X &X  &X &X   &X &X  &X  &X   &X  &X &X &X &X\\
X &X  &X &X   &X &X &X &X   &{\bf 1} &\ldots &\ldots &{\bf 1} \\
X &X  &X &X &X &{\bf 1}  \\
 X  &X &X  &X  &{\bf 2} &{\bf 2}   \\ 
X &X &X &X\\
X  &X\\
\end{ytableau}\end{tiny}
\vskip.08in
\begin{small}\vbox{Figure 4a': Lattice permutation  with at least two 2's}
\end{small}\end{center}
\vskip .05in
\item[Subcase 4b] We have the skew-shape $\lambda/\mu$ in Figure 4b, where there is no  connected horizontal strip to the north-east  of  the \lq\lq backwards  L\rq\rq component, but there is (necessarily, because $q\geq 4$) ) a connected horizontal strip below (south-west of) it.   In this case again there is clearly a lattice permutation filling of weight $\nu$ containing a 2 by 2 square, obtained by filling the right-most cell of the {\it closest} connected horizontal strip with a 2 (and the remaining cells, if any, with 1's, including all other horizontal strips), hence a weight $\nu$ such that $s_\nu$ appears in $g_q.$ See Figure 4b'.
\vskip .05in
\ytableausetup{centertableaux}
\begin{center}\begin{tiny}
\begin{ytableau}
X &X  &X &X   &X &X &X &X   &X &X &X &X \\
X &X  &X &X &X &X &d  \\
 X  &X &X  &X  &X &c_1 &a   \\ 
X &X &X &X &X \\
X &X & & &\\
X  &X\\
\end{ytableau}\end{tiny}
\vskip.06in
\begin{small}Figure 4b: Horizontal strip south-west of  the skew-shape  $d,a,c_1.$ 
\end{small}\end{center}
\vskip .1in
\ytableausetup{centertableaux}
\begin{center}\begin{tiny}
\begin{ytableau}
X &X  &X &X   &X &X &X &X   &X &X &X &X \\
X &X  &X &X &X &X &{\bf 1}  \\
 X  &X &X  &X  &X &{\bf 1} &{\bf 2}   \\ 
X &X &X &X &X\\
X &X &{\bf 1} &{\bf 1} &{\bf 2}\\
X  &X\\
\end{ytableau}\end{tiny}
\vskip.06in
\begin{small}Figure 4b': Lattice permutation of weight $\nu$ with at least two 2's \end{small}
\end{center}

\end{description}
\item[Case 5] Now suppose that there are at least two connected components in $\lambda/\mu$, all but one of which are connected horizontal strips, and the exceptional component $L$ has shape $(k,1)$ for some $q-1>k\geq 2.$  We call $d_1,\ldots, d_k$ the cells (left to right) in the first row of $L,$ and $e_1$  the unique  cell in row 2 of $L$  immediately below $d_1.$   By assumption, there is at least one other connected component, which is a horizontal strip.  
 
\begin{description}
\item[Subcase 5a] If there is such a horizontal strip above $L,$  fill $L$ with the number 1 in cells $d_1, \ldots, d_{k-1}$ and put 2's in the cells  $e_1$  below $d_1,$ and in the right-most cell $d_k$.  Fill all other connected components with 1's. Clearly this gives a lattice permutation whose weight is $(q-2,2)$ and hence indexes a Schur function in $g_q.$ 
\item[Subcase 5b] If all the connected components of $\lambda/\mu$ that are horizontal strips are below $L$, fill $L$ with $k$ 1's in cells $d_1,\ldots , d_k$ and a 2 in cell $e_1.$  Let $C=c_1,\ldots, c_r$ be (left to right) the cells in the connected horizontal strip closest   to $L$ and below it.  Then $r\geq 1.$ Fill the right-most cell $c_r$ with a 2 (this is possible because $k\geq 2$), and all others with a 1.  Fill all other connected components with 1's. Once again  this gives a lattice permutation whose weight is $(q-2,2)$ and hence indexes a Schur function in $g_q.$ 
\end{description}
\item[Case 6] The final case to consider is when the exceptional  connected component  of $\lambda/\mu$ is a vertical strip $V$ of size 2, and all the others are horizontal strips (of size 1 or more).  There are four subcases: 
\begin{description}
\item[Subcase 6a] Let $q\geq 4.$ Assume that  there is a horizontal strip $H_1$ above $V$ and a horizontal strip $H_2$ below $V$, and assume these are the closest components to $V$. Fill $V$ with 1 and 2 in the obvious column-strict way.  Fill the left-most cell of $H_1$ with a 1, and the right-most cell of $H_2$ with a 2.  (These cells exist since $q\geq 4.$) Fill all other cells in the skew-shape $\lambda/\mu$ with 1's.  Clearly this gives a lattice permutation of weight $(q-2,2).$   See Figure 5a.

\vskip .1in
\ytableausetup{centertableaux}
\begin{center}\begin{tiny}
\begin{ytableau}
X &X  &X &X   &X &X  &X  &X   &X  &X &{\bf 1} &\ldots &{\bf 1}\\ 
X &X  &X &X   &X &X &X &X   &X &X  \\
X &X  &X &X &X &X &{\bf 1}  \\
 X  &X &X  &X  &X &X &{\bf 2}   \\ 
X &X &X &X &X &X\\
X &X &{\bf 1} &\ldots &{\bf 1} &{\bf 2}\\
X  &X\\
\end{ytableau}\end{tiny} \begin{small} Figure 5a \end{small}
 \end{center}
\vskip .1in
\item[Subcase 6b]  First let $q\geq 5.$ Assume that all the horizontal strips of the skew-shape $\lambda/\mu$ are  above $V$. Fill all the horizontal strips  with 1's, and $V$ with 2 and 3. Thus we have a lattice permutation filling of weight $(q-2,1,1). $ Since   $q-2\geq 3,$  this indexes a Schur function in $g_q.$ See Figure 5b. This argument fails if $q=4;$ it will be addressed in {\bf Subcase 6d}.
\vskip .2in
\ytableausetup{centertableaux}
\begin{center}\begin{tiny}
\begin{ytableau}
X &X  &X &X   &X &X  &X  &X   &X  &X &{\bf 1} &\ldots &{\bf 1}\\ 
X &X  &X &X   &X &X &X &X   &{\bf 1}&{\bf 1}  \\
X &X  &X &X &X &X &{\bf 2}  \\
 X  &X &X  &X  &X &X &{\bf 3}   \\ 
X &X &X &X &X &X\\
X  &X\\
\end{ytableau}\end{tiny}\begin{small} Figure 5b \end{small}
 \end{center}
\vskip .1in
\item[Subcase 6c] Let $q\geq 5.$  Now assume all the horizontal strips of the skew-shape $\lambda/\mu$ lie below $V$, and let $H_0$ be the closest one below $V$. Fill $V$ with 1 and 2, and fill the right-most cell of $H_0$ with a 3, and all remaining cells in the skew-shape with 1's.  Again we have a lattice permutation filling of weight $(q-2,1,1). $ Since   $q-2\geq 3,$  this indexes a Schur function in $g_q.$ See Figure 5c. Again, the argument fails if $q=4,$ and will be addressed in {\bf Subcase 6d} below.
\vskip .1in
\ytableausetup{centertableaux}
\begin{center}\begin{tiny}
\begin{ytableau}
X &X  &X &X   &X &X &X &X   &X   &X  &X  \\
X &X  &X &X &X &X &{\bf 1}  \\
 X  &X &X  &X  &X &X &{\bf 2}   \\ 
X &X &X &X &X &X\\
X &X &{\bf 1} &\ldots &{\bf 1} &{\bf 3}\\
X  &{\bf 1} \\
X\\
\end{ytableau}\end{tiny}\begin{small} Figure 5c \end{small}
 \end{center}
\vskip .1in
\item[Subcase 6d] Now let $q=4,$ so that in the skew-shape we have, in addition to the vertical strip $V,$ either two disconnected single squares or a single connected horizontal strip of size 2. In the former case the entire skew-shape is clearly a vertical strip of size 4, so can be filled with a lattice permutation of weight $(1^4),$ and the question is settled.  In the latter case, our skew-shape $\lambda/\mu$ consists of a vertical strip of size 2 and a horizontal strip of size 2. 
Suppose that the top cell of $V$ is in row $\alpha$ and column $j,$ while the left-most cell (call it $d$) of $H$ is in row $i$ and column $\beta.$ 
We are interested in the portion $S(\mu)$ of $\mu$ lying between rows $\alpha$ and $i,$ and columns $j$ and $\beta.$ Let $X$ be a cell of $\mu$ in the south-east boundary of $S(\mu)$. 

First let the horizontal strip $H$ lie below the vertical strip $V,$ so $i\geq \alpha+2$ and $j\geq \beta+2.$
We replace $\mu$ with the partition $\bar\mu=\mu\backslash X\cup \{d\}.$  The resulting skew-shape $\lambda/\bar\mu$ consists of the vertical strip $V$ and two other cells; the higher new cell, $X,$ is in a row above $H$ and a column to the left of $V.$  In all cases the following filling gives a lattice permutation of weight $(2^2)$: put 1 and 2 in $V,$ 1 in $X$ and 2 in the right-most cell of $H.$ 
It remains to check that $\bar\mu$ is not a forbidden shape.  In all except one case $\bar\mu$ contains either three rows or three columns.   The exceptional case is  when $i=\alpha+2$ and $j=\beta+2,$ forcing the cells of $\mu$ lying between rows $\alpha$ and $i,$ and columns $j$ and $\beta,$ to form a 2 by 2 square.  But $\mu$ has $p\geq 5$ cells, so in $\mu,$ there is either a row (of length at least 3) above $V$ or a column (of length at least 3) to the left of $H,$ and the difficulty is resolved.

Next assume  that the horizontal strip $H$ is above the vertical strip $V,$ so that $i<\alpha$ and $j<\beta.$  Here we have three subcases to consider: 
\begin{description}
\item[Subcase 6d(i)]
 Assume that 
there is a cell $X$ in the south-east boundary of  $\lambda$ in row $i'$ and column $j'$, where $i<i'<\alpha, j<j'<\beta,$  i.e., $i'$ is  a row index  strictly above  $V$ and strictly below the row of $H$, and $j'$ is a column index also strictly to the right of $V$ and strictly to the left of $H.$ Recall that $d$ is the left-most cell of $H$.  Note that the skew-shape $\lambda/\mu$ is the union of $V$ and $H.$ Set $\bar\mu=\mu\cup\{d\}\backslash \{X\}.$ One checks easily that 
$\bar\mu$ is not of shape $(p-1,1)$ or $(2, 1^{p-2}),$ because  (a) $p\geq 5$ and (b) $X$ must be part of a 2 by 2 square of $\mu$, so including the cell $d$ in $\mu$ results in a row of length at least 3.  This ensures that $\bar\mu$ contains the shape $(3,1,1)$. Also now $\lambda/\bar\mu$ is clearly a vertical strip.    
\item[Subcase 6d(ii)] If no such cell $X$ exists, this means the strips $H$ and $V$ are at the ends of a hook shape $(r, 1^s)$ for some $r,s$ such that 
$1\leq r+s\leq p.$  Consider first the case $r+s=p.$ In this case $\lambda$ is also a hook.
In Figure 5d below,  $\mu$ is a hook $(\mu_1, 1^{p-\mu_1})$ with the vertical strip $V$ at the bottom and the horizontal strip $H$ on top.   Also, by our initial hypothesis that $s_\mu$ appears in $g_p,$ we know $\mu_1\neq 2.$ 

\vskip .1in
\ytableausetup{centertableaux}
\begin{center} \begin{tiny}
\begin{ytableau}
X &X  &X &X    &X  &X &X & &\\
X  \\
X \\
 X    \\ 
X \\
  \\
  \\
\end{ytableau}\end{tiny} \begin{small} Figure 5d: $\mu=(7, 1^4), \lambda=(9, 1^6)$ \end{small}
\end{center}
\vskip .1in

Suppose that $\mu_1=1.$ Then $\mu=(1^p)$ and $p\geq 5.$ If we take $\bar\mu$ to be the shape obtained by including the horizontal strip $H,$ and removing the bottom two squares of $\mu,$  then $\bar\mu=(3, 1^{p-3})$ and $\lambda/\bar\mu$ is a connected vertical strip of size 4, which satisfies our requirements.

Suppose that $\mu_1\geq 3,$ and let $\mu=(\mu_1, 1^{r}).$ (Thus $\mu_1+r=p\geq 5$ and $r\neq 1.$) The preceding construction now gives $\bar\mu=(\mu_1+2, 1^{r-2})$ and a skew-shape that is a vertical strip, as long as $r\geq 2$ and $ r\neq 3.$
The  difficulty arises when $r=3$ (the shape $\bar\mu$ does not appear in $g_p$)  or $r=0.$ In the former case $\mu=(\mu_1, 1^3),$ and we can take $\bar\mu=(\mu_1+1, 1^2)$: the skew-shape will still be a vertical strip with two connected components, one of size 1 and one of size 3. In the case $r=0,$ we can take $\bar\mu=(p-2, 1^2)$ and then the skew-shape will be a horizontal strip of size 4 from the first row.

\item[Subcase 6d(iii)] Now assume the strips $H$ and $V$ are at the ends of a nonempty hook shape $(r, 1^s)$ for some $r,s$ such that 
$1\leq r\leq r+s< p,$  so $\lambda$ is NOT a hook.  Since $\mu$ has size $p,$ we know that either the column to the left of $V$ is nonempty, or the row above $H$ is nonempty.  Assume the latter case; hence the row above $H$ contains at least $r+2\geq 3$ cells of $\mu$ (to ensure that $\lambda=\mu\cup V\cup H$ is a legal shape).     See Figure 5e. Hence  the right-most cell $Y$ in the row of $\mu$ above $H$ must lie strictly to the right of the left cell $d$ of $H.$ 
\vskip .1in
\ytableausetup{centertableaux}
\begin{center} \begin{tiny}
\begin{ytableau}
X &X  &X &X   &X &X  &X &X   &X  &X &X &Y \\
X &X  &X &X_1   &X_2 &\ldots &X_r  &d & \\
X &X  &X &X_1'  \\
 X  &X &X  &\ldots     \\ 
X  &X &X  &X_s'     \\ 
X &X &X &\\
X &X &X &\\
X  &X\\
\end{ytableau}\end{tiny} \begin{small} Figure 5e \end{small}
\end{center}
\vskip .1in

Set $\bar\mu=\mu\cup\{d\}\backslash\{Y\}. $ From the rows of $H$ and  $Y$, we see that since $r\geq 1,$  $\bar\mu$ contains a 2 by 2 square, and $\lambda/\bar\mu$ is now a vertical strip.  

Now assume that the row above $H$ is empty.  Then size considerations again imply that there is a nonempty column to the left of $V.$  By exchanging the top cell in $V$ with the bottom-most cell in the column to the left of $V,$ we arrive at the same conclusion as before.

\end{description}
\end{description}

\end{description}

   Hence we conclude that in all cases, $s_\lambda$ appears in $g_p g_q$  when $p>q\geq 4.$ 

Now let $q=3$ and $p=n-q\geq 6.$ Let $\lambda$ be a partition of $n$ and let $\mu\subset \lambda$ such that $s_\mu$ appears in $g_p=g_{n-3},$ and $\lambda/\mu$ can only be filled with a lattice permutation of weight $(2,1).$  We must show as before that there are partitions $\bar\mu$ and $\nu$ such that $s_{\bar\mu}$ appears in $g_{n-3},$ $s_\lambda$ appears in $s_{\bar\mu} \cdot s_\nu,$ and $\nu\neq (2,1).$ The cases are as follows:
\begin{enumerate}
\item Assume the skew-shape $\lambda/\mu$ forms a partition $(2,1)$, i.e., two cells with one cell below the left-most cell $d$ in the first row of the skew-shape.  
\begin{enumerate} 
\item If $\mu$ lies entirely above the skew-shape, let $X$ be the last cell in the row immediately above the skew-shape.  If $\mu$ has at least two rows,  then it is easy to see that we can take $\bar\mu=\mu\cup\{d\}\backslash \{X\}$ and the resulting skew-shape is now a vertical strip of size 3. Also, since $p\geq 6,$  $\bar\mu$ now spans three rows, and either contains a 2 by 2 square, or its first row has length at least 3, so the forbidden shapes are avoided.  Now suppose that $\mu$ consists of only one row lying above the skew-shape $(2,1).$ Let $X_1, X_2$ be the last two cells in the unique row of $\mu.$ Let $e$ be the cell below $d$ in the skew-shape.  Set $\bar\mu=\mu\cup\{d,e\}\backslash\{X_1, X_2\}.$ Then $\bar\mu=(\mu_1-2, 1,1)$ has first part equal to at least 4, and $\lambda/ \bar\mu$ is now a horizontal strip. 

 For the case when $\mu$ lies entirely to the left of the skew-shape, simply transpose this argument.
\item  Otherwise, at a minimum, we have cells both above and to the left of the skew-shape, at least 6 in all, so that, if $i, i+1$ are the rows of $\lambda,$ and  $j, j+1,$ the columns occupied by the skew-shape, $i,j\geq 2,$ then  $\mu_{i-1}\geq \mu_i+2, \mu_i=\mu_{i+1}\geq 1$ (and $\mu_{j-1}'\geq \mu_j'+2, \mu_j'=\mu_{j+1}'\geq 1$).  Again by a similar exchange of two cells as in (1) (a)  above, i.e.,  setting  $\bar\mu=(\mu_1,\ldots,\mu_{i-1}-1, \mu_i+1,\mu_{i+1}, \ldots)$ we arrive at the conclusion that the skew-shape is a vertical strip of size 3, and the shape $\bar\mu$ contains a 2 by 2 square.
\end{enumerate}
\item Now suppose the skew-shape is a \lq\lq backwards L\rq\rq  shape; refer to  Figure 6, where the skew-shape stops at the cell labelled $c_1.$   
\vskip .1in
\ytableausetup{centertableaux}
\begin{center}
\begin{tiny}
\begin{ytableau}
X &X  &X  &X  &X & X &X &X   &X &X &X &{\bf X_0} \\
X &X  &X &X &{\bf X'} &d  \\
 X &X   & \ldots  & X &c_1 &   \\ 
X  & X \\
\end{ytableau}
\end{tiny} \begin{small} Figure 6 \end{small}\end{center}
\vskip .1in
\begin{enumerate}
\item  If $\mu\backslash \{X'\}$ lies entirely above the three cells in the skew-shape, let $r$ be the lowest row of $\mu$ above the skew-shape.   Then $r\geq 2$ because $\mu\neq (p-1,1).$  In Figure 6, the cell labelled ${\bf X'}$ occurs in column 1 of $\lambda.$ Clearly $\mu_r\geq 2$ and $\mu_{r+1}=1,$ (${\bf X'}$ is the sole cell in row $r+1$ of $\mu$)  and $\mu$ has $r+1$ rows.  Then we may exchange the cell labelled $c_1$ in Figure 6 for the right-most cell labelled ${\bf X_0}$ in row $r$ of $\mu,$ producing $\bar\mu= (\mu_1, \ldots, \mu_{r}-1, \mu_{r+1}=1, 1).$   The lowest two cells of $\bar\mu$ are the cells labelled  ${\bf X'}$ and $c_1$ in Figure 6, so that $\bar\mu$ spans at least three rows. The condition $p\geq 6$ guarantees that if ${\bf X_0}$ is in the same column as $d,$ i.e., if $\mu_r=2,$  then either $\mu_{r-1}\geq 3$ or $r\geq 3;$ in the latter case, $\bar\mu$ has a 2 by 2 square.  Either way $\bar\mu$ is an allowed shape.  Clearly the new skew-shape is a vertical strip.

The case when $\mu\backslash \{X'\}$ lies entirely to the left of the three cells is treated by transposing the above argument.
\item There are cells above and to the left of the skew-shape in Figure 6. 
Referring to that figure, because $p\geq 6,$ at a minimum $\lambda$ consists of the 3 by 3 square in which the skew-shape occupies the lower right corner, and $\mu$ contains at least the remaining 6 cells in the 3 by 3 square. Again consider  the right-most cell ${\bf X_0}$ in the row immediately above the skew-shape (see Figure 6).  Exchange the cells labelled $c_1$ and  ${\bf X_0}.$ 
(In the extreme case ${\bf X_0}$ will be directly above the cell labelled $d).$  That is, $\bar\mu=\mu\cup\{c_1\}\backslash \{\bf X_0\}.$  Clearly $\bar\mu$ has a 2 by 2 square, and the new skew-shape is a vertical strip.  
\end{enumerate}  
\end{enumerate}
This completes our proof of Lemma 3.5 in the case $q=3,$ $p\geq 6.$
\end{proof}


\section{Final Remarks}

We conclude with one more theorem.  Proposition 5.2 below was conjectured by the author at the time of submission, and subsequently proved by Swanson \cite{Sw}. The work of the last two sections   shows that the truth of Proposition 5.2  implies Theorem 5.1.  This is immediate from Proposition 3.6 and the  technical Lemmas 3.4 and 3.5, as well as the remarks following each of them regarding the classes $(3,1)$ and $(5,3).$  
Equivalently, it is an immediate consequence of Theorem 3.9 that Proposition 5.2 implies the following:


\begin{thm} Let $n\neq 4, 8.$  The partition $\lambda$ of $n$ indexes a global conjugacy class for $S_n$ if and only if it has at least two parts, and all its parts are odd and distinct.  
\end{thm}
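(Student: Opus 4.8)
The plan is to deduce Theorem 5.1 from the machinery already assembled, treating it as essentially a packaging of the necessary condition (Proposition 3.6(1)) together with the sufficiency furnished by Lemmas 3.4, 3.5 and Proposition 3.1, once Proposition 5.2 (that odd primes' characteristics $f_k$ contain $g_k$, or whatever positivity statement Swanson established) is in hand. First I would dispose of the forward (necessity) direction, which is the cheap half: if $\lambda$ indexes a global class, then by Proposition 3.6(1) all its parts are odd and distinct and it has at least two parts. This requires no new work beyond citing that proposition, and it already explains why single-part classes and classes with even or repeated parts are excluded.

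For the reverse (sufficiency) direction I would split on whether $1$ is a part of $\lambda$. Suppose $\lambda$ has at least two parts, all odd and distinct. The Frobenius characteristic of the conjugacy action on the class $\lambda$ is $\prod_i f_{\lambda_i}$, and by Proposition 3.1 (together with Proposition 5.2 for parts that are odd but not prime) each $f_{\lambda_i}$ contains $g_{\lambda_i}$ as a subrepresentation. Hence it suffices to show the product $\prod_i g_{\lambda_i}$ contains every irreducible, i.e.\ that $W_\lambda$ does. This is exactly the content of Theorem 3.9, so the strategy is to verify that a partition with at least two parts, all odd and distinct, never falls into the six exceptional partitions $(3,1),(4,1),(3,3),(4,4),(5,3),(4,4,1)$. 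Since all parts of $\lambda$ are odd, the partitions containing an even part—$(4,1),(4,4),(4,4,1)$—are automatically ruled out; since the parts are distinct, $(3,3)$ is ruled out; and since $n\neq 4,8$ by hypothesis, $(3,1)$ (of $4$) and $(5,3)$ (of $8$) are ruled out. Thus no admissible $\lambda$ lies in the exceptional list, and Theorem 3.9 gives that $W_\lambda$, hence the conjugacy action on $\lambda$, contains all irreducibles.

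I would organize the argument to lean on Theorem 3.9 as a black box, noting that its proof is precisely where Lemmas 3.4 and 3.5 (the latter proved case-by-case in Section 4) do their work: Lemma 3.5 handles the two largest parts to produce $A_{p+q}$, and Lemma 3.4 is iterated over the remaining smaller parts. The only genuine subtlety is the role of Proposition 5.2. The earlier Lemmas speak about products of the $g_k$, but the conjugacy class characteristic involves the $f_{\lambda_i}$; the bridge is the containment $f_k \supseteq g_k$, which for odd \emph{prime} $k$ is Proposition 3.1 but for general odd $k$ is the newly proved Proposition 5.2. The hard part is therefore not the combinatorics here—that is fully discharged by Theorem 3.9—but the verification that the hypotheses line up so that Theorem 3.9 applies, and in particular that \emph{every} admissible $\lambda$ avoids the six exceptions. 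The excluded values $n=4,8$ are exactly the cases where an otherwise-admissible odd-distinct partition ($(3,1)$ and $(5,3)$) would fail, which is why they must be removed from the statement; the affirmative global class for $n=8$ uses the non-global-under-$g$ but global-under-$f$ partition $(7,1)$, and is handled separately in Theorem 1.1 rather than here.

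In summary, the reverse direction reduces, via $f_k\supseteq g_k$, to checking that the combinatorial exceptions of Theorem 3.9 are incompatible with being odd, distinct, and of degree $n\neq 4,8$; the forward direction is immediate from Proposition 3.6(1). I would write this as a short proof consisting of these two paragraphs, citing Theorem 3.9 and Proposition 3.6 directly and invoking Proposition 5.2 only to upgrade Proposition 3.1 from odd primes to all odd parts.
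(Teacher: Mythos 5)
Your proposal is correct and follows the paper's own route: the paper derives Theorem 5.1 exactly as an immediate consequence of Theorem 3.9 (equivalently, Lemmas 3.4 and 3.5 plus the remarks on $(3,1)$ and $(5,3)$), with Proposition 3.6(1) giving necessity and Proposition 5.2 supplying the bridge $f_k\supseteq g_k$ for all odd $k$. Your explicit check that odd, distinct parts with $n\neq 4,8$ avoid the six exceptional partitions is precisely the verification the paper leaves implicit.
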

%
%
\begin{prop} (\cite{Sw}, conjectured in \cite[Remark 4.8]{Su2}) If $n$ is odd, $f_n$ contains all irreducibles except $(n-1,1)$ and $(2, 1^{n-2}).$  If $n$ is even, $f_n$ contains all irreducibles except $(n-1,1)$ and $(1^n).$
\end{prop}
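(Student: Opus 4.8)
The plan is to read off the multiplicity of $s_\mu$ in $f_n$ from the power-sum expansion and then prove positivity by isolating a dominant term, disposing of the exceptional shapes by exact computation. Since $C_n$ has exactly $\phi(d)$ elements of order $d$ for each $d\mid n$, each of cycle type $d^{n/d}$ (that is, $n/d$ cycles of length $d$), one has $f_n=\frac1n\sum_{d\mid n}\phi(d)\,p_d^{n/d}$, where $\phi$ is Euler's totient. Expanding via $p_\rho=\sum_\mu\chi^\mu(\rho)\,s_\mu$, the coefficient of $s_\mu$ is $m_\mu=\frac1n\sum_{d\mid n}\phi(d)\,\chi^\mu(d^{n/d})$. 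The divisor $d=1$ contributes the dominant term $\frac1n f^\mu$, the normalized degree of the irreducible, and the remaining summands are correction terms to be controlled. This is the direct analogue of the prime case (Proposition 3.1), where only $d=1$ and $d=k$ survive.

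First I would pin down the candidate exceptions by computing $m_\mu$ outright for the three relevant hooks. For $\mu=(n-1,1)$ one has $\chi^{(n-1,1)}(\rho)=(\#\text{ fixed points})-1$, which equals $n-1$ at $d=1$ and $-1$ at every $d>1$; since $\sum_{d\mid n,\,d>1}\phi(d)=n-1$, the sum telescopes to $m_{(n-1,1)}=0$ for all $n$. For $\mu=(1^n)$, restricting the sign character to $C_n$ shows by reciprocity that $m_{(1^n)}$ is $1$ when $n$ is odd and $0$ when $n$ is even, since an $n$-cycle has sign $(-1)^{n-1}$. Applying the conjugation relation $\chi^{\mu^t}(\rho)=\mathrm{sgn}(\rho)\,\chi^\mu(\rho)$ to the previous case then gives $m_{(2,1^{n-2})}=0$ for $n$ odd, while for $n$ even the parity of $n-n/d$ varies and the sum is positive. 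This already produces exactly the stated exceptions; what remains is to show strict positivity for every other $\mu$.

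For the positivity step the key input is the Murnaghan--Nakayama rule in its $d$-core / $d$-quotient form (James--Kerber): $\chi^\mu(d^{n/d})=0$ unless the $d$-core of $\mu$ is empty, in which case $|\chi^\mu(d^{n/d})|$ equals the number of standard $d$-rim-hook tableaux of shape $\mu$, a product of a multinomial coefficient with the degrees of the components of the $d$-quotient. I would then aim to prove a uniform bound $\sum_{d\mid n,\,d>1}\phi(d)\,|\chi^\mu(d^{n/d})|<f^\mu$ for all $\mu$ outside a bounded family of near-hook shapes, which forces $m_\mu>0$ by dominance, and to settle the finitely many remaining shapes by a direct check that does not depend on $n$.

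The main obstacle is precisely this bounding step: the trivial estimate $|\chi^\mu(d^{n/d})|\le f^\mu$ is useless because $\sum_{d\mid n}\phi(d)=n$, so I must extract genuine decay of the character values at elements with few cycles relative to the degree $f^\mu$, using the $d$-quotient factorization together with a lower bound on $f^\mu$ from the hook-length formula. An attractive alternative that sidesteps the sign cancellations entirely is the Kra\'skiewicz--Weyman theorem, which identifies $m_\mu$ with $\#\{T\in\mathrm{SYT}(\mu):\mathrm{maj}(T)\equiv0\pmod n\}$; positivity then becomes the purely combinatorial problem of constructing one standard tableau of shape $\mu$ with major index divisible by $n$, reducing the theorem to a case analysis on the geometry of $\mu$.
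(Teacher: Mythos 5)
Your computations for the exceptional shapes are correct: the expansion $f_n=\frac1n\sum_{d\mid n}\phi(d)\,p_d^{n/d}$, the telescoping that kills the coefficient of $s_{(n-1,1)}$, and the parity analysis giving multiplicity $1$ (resp.\ $0$) for $s_{(1^n)}$ when $n$ is odd (resp.\ even) and $0$ (resp.\ positive) for $s_{(2,1^{n-2})}$, all check out. This is the natural generalisation of the prime case, Proposition 3.1, where only $d=1$ and $d=n$ contribute. But this is the easy half of the statement: it verifies that the listed shapes behave as claimed, not that every other irreducible actually occurs.

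The genuine gap is the positivity step for all remaining $\mu$, and you have flagged it yourself without closing it. Your first strategy --- a uniform bound $\sum_{d\mid n,\,d>1}\phi(d)\,|\chi^\mu(d^{n/d})|<f^\mu$ outside a bounded family --- is never executed, and it cannot be made routine: the inequality genuinely fails for near-hook shapes (which is exactly why exceptions exist), and isolating the correct exceptional family uniformly in $n$, with character estimates at rectangular cycle types strong enough to beat the factor $\sum_{d>1}\phi(d)=n-1$, is a substantial piece of work rather than a corollary of Murnaghan--Nakayama. Your second strategy is not an alternative proof but a restatement of the theorem: by Kra\'skiewicz--Weyman, the multiplicity of $s_\mu$ in $f_n$ is the number of standard Young tableaux of shape $\mu$ with major index divisible by $n$, so ``construct one such tableau for each non-exceptional $\mu$'' is precisely the assertion to be proved, and the case analysis on the geometry of $\mu$ is absent. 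Note also that there is no proof in the paper to compare against: the proposition was conjectured by the author (\cite[Remark 4.8]{Su2}) and proved by Swanson in \cite{Sw}, by exactly this modular-major-index route --- the title of \cite{Sw} is your reformulated problem. Your proposal thus identifies the right reduction but stops where the real proof begins.
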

\noindent
{\sc Acknowledgment:} \small{I am grateful to the anonymous referee for a thorough and meticulous reading, and for outlining a reorganisation of the paper which greatly improved the  exposition.}

\bibliographystyle{amsplain.bst}

\end{document}